\documentclass[12pt, a4paper]{amsart}%
\usepackage{amsmath}
\usepackage{amsfonts}%
\usepackage{mathtools} 
\usepackage{amssymb}
\usepackage{enumerate}
\usepackage{xypic} 


\newcommand{\field}[1]{\ensuremath{\mathbb{#1}}}

\newcommand{\bbc}{\field{C}}
\newcommand{\bbd}{\field{D}}

\newcommand{\bbn}{\field{N}}
\newcommand{\bbp}{\field{P}}

\newcommand{\bbz}{\field{Z}}




\newcommand{\mbi}{{\rm{i}}}

\newcommand{\mbK}{\mathbf{K}}

\newcommand{\mbo}{\mathbf{o}}


\renewcommand{\Im}{\operatorname{Im}}

\renewcommand{\Re}{\operatorname{Re}}

\newcommand{\diag}{\operatorname{diag}}
\newcommand{\cn}{\operatorname{cn}}
\newcommand{\dn}{\operatorname{dn}}
\newcommand{\sn}{\operatorname{sn}}


\providecommand{\U}[1]{\protect\rule{.1in}{.1in}}
\newtheorem{theorem}{Theorem}
\theoremstyle{plain}

\newtheorem{definition}[theorem]{Definition}

\newtheorem{proposition}[theorem]{Proposition}

\theoremstyle{definition}

\newtheorem{remark}[theorem]{Remark}

\numberwithin{equation}{section}
\numberwithin{theorem}{section}



\begin{document}
\title[Lax equations, Singularities and Riemann-Hilbert Problems]{Lax equations, Singularities and Riemann-Hilbert Problems}
\author[A.F. dos Santos]{Ant\'{o}nio F. dos Santos}
\address{Departamento de Matem\'atica, Instituto Superior T\'ecnico, Portugal}
\email{afsantos@math.ist.utl.pt}
\author[P.F. dos Santos]{Pedro  F. dos Santos}
\address{Departamento de Matem\'atica, Instituto Superior T\'ecnico, Portugal}
\email{pedro.f.santos@math.ist.utl.pt}

\
\maketitle
\begin{abstract}
The existence of singularities  of the solution for a class of Lax equations 
is investigated using a development of the factorization method first proposed by Semenov-Tian-Shansky and Reymann \cite{FAA},
\cite{EMS}.
It is shown that the existence of a singularity at  a point $t=t_i$ is directly related to the property that
the kernel of a certain Toeplitz operator (whose symbol depends on $t$) be non-trivial. The investigation of this question
involves the factorization on a Riemann surface of a scalar function closely related to the above-mentioned operator. 
An example  is presented and  the set of singularities is shown to coincide 
with the set obtained by classical methods. This comparison involves relating the two
Riemann surfaces associated  to the system by these methods.
\end{abstract}

\section{Introduction}
\label{sect.1}
In this paper we investigate the existence of singularities of the solutions of Lax equations for a class 
of  equations that applies to most finite-dimensional dynamical systems such as \emph{e.g.} classical
tops (see \emph{e.g.} \cite{AU96}, \cite{EMS}, \cite{STS00}). To that end we consider the time variable $t$ to be a complex
variable and determine the singularities of the solution in the complex plane. This is tied to the question of global existence of  solutions
for real $t$ as  the non-existence of singularities for real $t$ implies global existence of the solution. Also, it is likely   that full knowledge
of the location of complex singularities may eventually give more insight into the dynamics of the system.

Our approach  is a development of the factorization method first proposed by Semenov-Tian-Shansky and Reymann \cite{FAA},
\cite{EMS} which  in turn may be seen  as a generalization of the AKS (Adler-Kostant-Symes) theorem that  applies to finite dimensional algebras \cite{AMV}.

To the best of our knowledge the first application of this method in the setting of an infinite-dimensional algebra
appeared in \cite{CdSdS07}, which focused on a restricted class of Lax equations. The absence in the literature of more fully computed examples of
application of this method is probably due to the fact that it involves the factorization of a continuous function on a contour
in a Riemann surface (for a general treatment of this problem see \cite{CdSdS08}).

Considering  $t$ as a complex variable and extending the class of Lax equations requires a new analysis of the  results of \cite{CdSdS07}, where we were able to avoid
making some delicate assumptions like the differentiability of the factors  in the Wiener-Hopft factorization (see definition below) of the matrix function $\exp(tL_0)$, where $L_0$ is the value of the
Lax matrix $L_t$ at $t=0$. 
This is done in Section~\ref{sect.2} and continued in Section~\ref{sect.3} for the question of location of the singularities of the solution.
Our approach makes the treatment fully rigorous and, in our view, is crucial for the treatment in the context of a complex $t$.

The main result is Theorem~\ref{thm.3.1}, which we state next.  For this we note that the  space
$\left[C_\mu\left(S^1\right)  \right]^n$ of H\"{o}lder continuous $n\times n$ 
matrix-valued functions has a direct sum decomposition
\begin{equation}
\label{eq.1.2}
\left[C_\mu\left(S^1\right)  \right]^n = \left[C_\mu^+  \right]^n \oplus \left[C_\mu^-  \right]^n_0,
\end{equation}
where $\left[C_\mu^+  \right]^n$ is the subspace \footnote{ We have  omitted  $S^1$ to simplify notation} of functions having an analytic extension to the unit disc $\bbd$ and
$\left[C_\mu^- \right]^n_0$ is the subspace  of functions
admitting analytic extensions  to $\bbc\setminus \overline{\bbd}$
that vanish at infinity. In what follows  $\left[C_\mu^- \right]^n=\left[C_\mu^- \right]^n_0\oplus\bbc^n$. 

Consider the   Toeplitz operator 
\begin{equation}
\label{eq.1.1}
T_G = P^+GI_+\colon \left[C_\mu^+  \right]^n\to \left[C_\mu^+ \right]^n
\end{equation}
where $G=\exp(tL_0)$, $I_+$ is the identity operator on $\left[C_\mu^+ \right]^n$ and $P^+$ is the projection of 
$\left[C_\mu  \right]^n$ onto $\left[C_\mu^+  \right]^n$ associated to the decomposition
\eqref{eq.1.1}. Theorem~\ref{thm.3.1} states that the Lax equation,
\begin{align}
\label{eq.1.3}
\frac{dL_t}{dt} = \left[L_t^+ + A_0, L_t  \right], 
\end{align}
has a solution in a neighbourhood of the point $t_i$ \emph{iff} $T_G$ is injective at the point $t_i$.
In the above equation $A_0 = P_0L_t$ where $P_0\colon \left[ C_\mu(S^1)\right]^{n\times n}\to \bbc^{n\times n}$
is a bounded linear operator.

In the calculation of the singularities we need the notion of a Wiener-Hopf factorization. 
Let  $G\colon S^1\to \left[ C_\mu\right]^{n\times n}$. We say that $G$ possesses a \emph{Wiener-Hopf factorization}
(also called Riemann-Hilbert factorization and Birkhoff factorization \cite{G97}, \cite{PS86}) if $G$ can  be 
represented in the form 
\begin{equation}
\label{eq.1.4}
G=G_- D G_+,
\end{equation}
where $G_\pm$ and their inverses belong to $\left[C_\mu^\pm \right]^{n\times n}$ and $D=\diag(r^{k_1},\dotsc, r^{k_n})$ with 
$k_1\geq k_2\geq\dotsb k_n$ and $r$ is a rational function with a zero in $\bbd$ and a pole in $\bbc\setminus\overline{\bbd}$.
The factorization is said to be \emph{canonical} if $D=I_n$,  where $I_n$ denotes the identity matrix. The above definition applies to 
functions belonging to other spaces (see \emph{e.g.} \cite{ClGo81}). In  $[C_\mu]^{n\times n}$ $G$ possesses a Wiener-Hopf
factorization \eqref{eq.1.3} \emph{iff} $G$ is invertible on $S^1$. We recall from \cite{ClGo81} that the operator $T_G$ is invertible
\emph{iff} the factorization \eqref{eq.1.4} is canonical. This is the basic result from operatior theory that will be used to locate the 
singularities of the solution of the Lax equations. A direct consequence of Theorem~\ref{thm.3.1} is Proposition~\ref{prop.3.2} 
which states that the solution of equation \eqref{eq.1.3}
has  a singularity at $t=t_i$  \emph{iff} the Riemann-Hilbert problem
\begin{align*}
G \Phi^+ = \Phi^- && \left( \Phi^+ \in  \left[C_\mu^+ \right]^n, \, \Phi^-\in  \left[C_\mu^-  \right]^n_0\right)
\end{align*}
has a nontrivial solution at $t=t_i$. This is equivalent to saying that the Wiener-Hopf factorization of $G$ is noncanonical  
for $t=t_i$.

The paper ends with an example of a dynamical system that belongs to the standard Lax class considered  in \cite{CdSdS07}. For this example it is possible to obtain the solution to the Lax equation by classical methods (integration of the system of ordinary 
differential equations) and thus obtaining its set of singularities. This  enables us to compare it with the set of singularities derived by our method.   A rather
interesting point is that the classical approach and the Lax  equation one lead to different Riemann surfaces. The two surfaces are closely related, as shown in Proposition~\ref{prop.4.2}, but
the fact that they are different led us to  derive several intermediate results in order  to show that the sets of singularities obtained by 
the two approaches coincide. 

The study of the example given in Section~4 takes a large part of the paper but we believe that it not only illustrates the theory that
we present here but also sheds some light into the relation between the classical methods and those based on the Lax equation
- a point that may be obscure  in the study of other finite-dimensional integrable systems, for example, in the study of some classical 
tops.

\section{Lax equation and Riemann-Hilbert problems}
\label{sect.2}

In this section we generalize the results of \cite[\S 2]{CdSdS07} replacing a neighborhood of the origin in the real  variable $t$ by 
a neighborhood of the origin in the now complex variable $t$ (for what follows it is necessary to consider $t$ as complex variable)
and extending the class of equations considered.
In \cite{CdSdS07} we studied a class of Lax equations of the form
\begin{equation}
\label{eq.2.0}
\frac{dL_t}{dt}= \left[L_t^+, L_t\right]
\end{equation}
where the dynamical variables $L_t^+$, $L_t$ depend on  a parameter $\lambda$ varying on the unit circle $S^1$, $L_t$ is a matrix-valued
Laurent polynomial in $\lambda$ and $L_t^+$ is the part of $L_t$ analytic in the unit disc $\bbd$. In \cite{CdSdS07} we called the above class
the \emph{standard Lax class} (it includes \emph{e.g.} a special case of  the Lagrange top).
In this paper we study a class of Lax equations more general than the above one. It includes most finite-dimensional integrable systems. We write
the equations of this class in the form 
\begin{equation}
\label{eq.2.1}
\frac{dL_t}{dt}= \left[L_t^+ + A_0, L_t\right]
\end{equation}
where $L_t^+$ is defined as above and $A_0 = P_0L_t$, with $P_0$ being a bounded linear operator from the space $\left[C^1(\Omega)\right]^{n\times n}$ of matrix-valued H\"{o}lder functions on $S^1$ to the space $\bbc^{n\times n}$
of constant matrix functions on $S^1$ (depending on $t$  as a parameter).

To state the first result in a rigorous way we need the definition that follows 

\begin{definition} 
\label{def.2.1}
Let  $\left[C^1(\Omega)\right]^{n\times n}$ be the space of continuously differentiable matrix functions with respect to $t$ in a region $\Omega\subset\bbc$
and define $L_t(\lambda)\in\left[C^1(\Omega)\right]^{n\times n}$ to be a Laurent polynomial of the form
\begin{equation}
\label{eq.2.2}
L_t(\lambda) = \sum_{k=-m}^1 L_t^{(k)}\lambda^k \qquad (m\in\bbn, \lambda \in S^1),
\end{equation}
where $L_t^{(k)}\in \bbc^{n\times n}$.
This gives for $L_t^+(\lambda)$ the expression
\begin{equation}
\label{eq.2.3}
L_t^+(\lambda) = \sum_{k=0}^1 L_t^{(k)}\lambda^k.
\end{equation}
\end{definition}

\begin{remark}
\label{rmk.22}
In the case of the standard Lax class ($A_0=0$ in \eqref{eq.2.1}) equation  \eqref{eq.2.1} together with formulas \eqref{eq.2.2} and \eqref{eq.2.3} 
imply that $L_t^{(1)}$ is a constant of the dynamics.
\end{remark}

We can now state our first result which is a generalization of \cite[Theorem 2.3]{CdSdS07} extending the applicability of known formulas
(see \emph{e.g.} \cite{EMS}) for $L_t$.

\begin{theorem}
\label{thm.2.3}
Let $L_t$ be an $n\times n$ matrix-valued function satisfying the Lax equation~\eqref{eq.2.1} in a simply-connected region $\Omega$
containing the origin (in the variable $t$). Then $L_t$ is given in the region $\Omega$ by the formulas
\begin{equation}
\label{eq.2.4}
L_t = \widetilde{G}_+ L_0 \widetilde{G}_+^{-1} = \widetilde{G}_-^{-1} L_0 \widetilde{G}_-
\end{equation}
where $L_0 = L_t|_{t=0}$ and $\widetilde{G}_+, \widetilde{G}_-$ satisfy in $\Omega$ the linear differential equations
\begin{align}
\label{eq.2.5}
\frac{d\widetilde{G}_+}{dt} = (L_t^+ + A_0)\widetilde{G}_+ && \frac{d\widetilde{G}_-}{dt} = \widetilde{G}_-(L_t^- - A_0)
\end{align}
subject to the initial conditions $\widetilde{G}_+|_{t=0} = \widetilde{G}_-|_{t=0} = I_n$, 
where $I_n$ is the identity $n\times n$ matrix.
\end{theorem}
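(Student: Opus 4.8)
The plan is to verify that the formulas \eqref{eq.2.4} actually solve the Lax equation, relying on the existence and uniqueness theory for the linear ODEs \eqref{eq.2.5}, and then to show that any solution of \eqref{eq.2.1} must coincide with this one. First I would observe that since $L_t^+ + A_0$ is a matrix-valued function that is $C^1$ in $t$ on the simply-connected region $\Omega$, the first equation in \eqref{eq.2.5} is a linear ODE in the unknown $\widetilde G_+$ with the prescribed initial condition at $t=0$; by the standard existence–uniqueness theorem for linear systems (valid over $\bbc$ on a simply-connected domain, so no monodromy obstruction) it has a unique solution $\widetilde G_+$, and the same reasoning applies to $\widetilde G_-$. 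Moreover $\widetilde G_\pm$ are invertible throughout $\Omega$, since their determinants satisfy scalar linear ODEs with initial value $1$ and hence never vanish (Liouville's formula: $\det \widetilde G_+ = \exp\int_0^t \operatorname{tr}(L_s^+ + A_0)\,ds$). This makes the conjugations in \eqref{eq.2.4} well-defined.

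Next I would compute the $t$-derivative of $M_t := \widetilde G_+ L_0 \widetilde G_+^{-1}$. Using $\frac{d}{dt}\widetilde G_+^{-1} = -\widetilde G_+^{-1}\bigl(\frac{d}{dt}\widetilde G_+\bigr)\widetilde G_+^{-1}$ together with \eqref{eq.2.5}, one gets
\begin{align*}
\frac{dM_t}{dt} &= (L_t^+ + A_0)\widetilde G_+ L_0 \widetilde G_+^{-1} - \widetilde G_+ L_0 \widetilde G_+^{-1}(L_t^+ + A_0) = \left[L_t^+ + A_0,\, M_t\right].
\end{align*}
So $M_t$ satisfies the same Lax equation \eqref{eq.2.1} as $L_t$, but with the coefficient $L_t^+ + A_0$ regarded as a known, fixed function of $t$ — the one built from the actual solution $L_t$. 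Since $M_0 = L_0$ and $L_t$ itself also satisfies this now-linear (in the unknown) matrix ODE with the same initial value, uniqueness for the linear ODE $\dot X = [L_t^+ + A_0, X]$, $X(0) = L_0$, forces $M_t = L_t$ on all of $\Omega$. An entirely parallel computation with $\widetilde G_-$, using the second equation in \eqref{eq.2.5} and the identity $L_t = L_t^+ + L_t^- $ (so that $[L_t^+ + A_0, L_t] = -[L_t^- - A_0, L_t]$, since $[L_t, L_t] = 0$), shows that $N_t := \widetilde G_-^{-1} L_0 \widetilde G_-$ likewise solves the same equation with $N_0 = L_0$, hence $N_t = L_t = M_t$, giving both equalities in \eqref{eq.2.4}.

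The one point that needs genuine care — and which I expect to be the main subtlety rather than a routine step — is the bookkeeping around $A_0$. Unlike in the standard Lax class, $A_0 = P_0 L_t$ depends on the solution, so the argument is not a clean "solve an autonomous factorization problem" but rather a fixed-point/consistency statement: given a solution $L_t$, define the coefficient $L_t^+ + A_0$ from it, solve \eqref{eq.2.5}, and check the conjugation reproduces $L_t$. I should make sure the decomposition being used is $L_t = L_t^+ + L_t^-$ with $L_t^- = \sum_{k=-m}^{-1} L_t^{(k)}\lambda^k$ so that the splitting $[L_t^+ + A_0, L_t] = -[L_t^- - A_0, L_t]$ used for the $\widetilde G_-$ computation is exactly the one matching the second ODE in \eqref{eq.2.5}; this is where the specific form \eqref{eq.2.2}–\eqref{eq.2.3} of the Laurent polynomial enters. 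Everything else is the classical Lax-pair-to-conjugation computation, and the simple-connectedness of $\Omega$ is what guarantees the ODE solutions $\widetilde G_\pm$ are globally single-valued on $\Omega$.
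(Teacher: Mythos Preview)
Your argument is correct and is exactly the standard verification the paper is alluding to: the paper's own proof simply says ``same as \cite[Theorem~2.3]{CdSdS07} with $L_t^\pm$ replaced by $L_t^\pm\pm A_0$,'' together with the remark that simple-connectedness of $\Omega$ guarantees global single-valued solutions of \eqref{eq.2.5}. Your computation of $\dot M_t=[L_t^++A_0,M_t]$, the analogous one for $N_t$, and the appeal to uniqueness for the linear ODE $\dot X=[L_t^++A_0,X]$ (with $L_t^++A_0$ regarded as a known coefficient coming from the given solution) constitute precisely that argument written out in full.
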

\begin{proof}
The proof goes along the same lines as the proof  of \cite[Theorem 2.3]{CdSdS07} with $L_t^\pm$ replaced by $L_t^\pm\pm A_0$.
We note only that the condition on the connectivity
of $\Omega$ is needed to ensure that equations~\eqref{eq.2.5} have well defined solutions throughout $\Omega$.
\end{proof}

\begin{proposition}
\label{prop.2.4}
If the singularities of $L_t$ with respect to $t$ are isolated then there exists a simply connected region $\Omega=\bbc\setminus B_s$ where $B_s$ is the union of two cuts joining the singularities.
\end{proposition}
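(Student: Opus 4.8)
The plan is to show that a countable, discrete set of singularities in $\bbc$ can always be enclosed in the union of two simple arcs whose complement is simply connected. First I would note that, since the singularities are isolated, the set $S=\{t_i\}$ of singularities is at most countable and has no accumulation point in $\bbc$; hence it can be enumerated $S=\{t_1,t_2,\dotsc\}$ (finite or infinite) with $|t_i|\to\infty$ if the set is infinite. The idea is then to sort the points into two families according to some linear functional --- for instance those with $\Re t_i\geq 0$ and those with $\Re t_i<0$, after a preliminary rotation and translation ensuring no $t_i$ lies on a troublesome line --- and to join the points of each family by a polygonal arc that runs off to infinity in a single direction.

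Next I would carry out the construction of each cut explicitly. For the family $\{t_{i_1},t_{i_2},\dotsc\}$ lying in a closed half-plane, after relabelling so that the real parts (say) are nondecreasing, I would take the piecewise-linear curve through $t_{i_1},t_{i_2},\dotsc$ and then continue it to $\infty$; by perturbing the vertices slightly if necessary (using again that the set is discrete) one ensures the arc is simple, i.e. has no self-intersections, and that the two arcs $B_s^{(1)}$ and $B_s^{(2)}$ are disjoint. Each arc is a proper embedding of a half-line (or of a compact interval, if that family is finite), so $B_s=B_s^{(1)}\cup B_s^{(2)}$ is closed in $\bbc$, contains every $t_i$, and is a disjoint union of two arcs as claimed. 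The complement $\Omega=\bbc\setminus B_s$ is open and contains the origin (one arranges the cuts to avoid $t=0$, which is legitimate since $L_t$ is regular at the origin), so it only remains to check simple connectedness.

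Finally I would verify that $\bbc\setminus B_s$ is simply connected. The cleanest route is to work on the Riemann sphere $\widehat{\bbc}=\bbc\cup\{\infty\}$: the closure of $B_s$ in $\widehat{\bbc}$ is $B_s\cup\{\infty\}$, which is a union of two arcs meeting only at $\infty$, hence an arc (if both families are infinite), or an arc together with a separate compact arc, or similar --- in every case a compact, connected or two-component set that is a tree (contains no loop). Its complement in $\widehat{\bbc}$ is therefore connected and simply connected, because removing a set with no loops and connected complement from $S^2$ cannot create a nontrivial cycle; equivalently one can invoke the fact that a domain in $\widehat{\bbc}$ is simply connected iff its complement (in $\widehat{\bbc}$) is connected, and $B_s\cup\{\infty\}$ is connected when arranged to share the point at infinity. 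Since $\Omega=\bbc\setminus B_s=\widehat{\bbc}\setminus(B_s\cup\{\infty\})$, this gives simple connectedness of $\Omega$ in $\bbc$.

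The main obstacle I anticipate is purely point-set-topological bookkeeping rather than anything deep: one must be careful, when the singular set is infinite, that the polygonal arcs really can be made simple and mutually disjoint and that they escape to infinity so their union with $\{\infty\}$ is connected; and one must make the "complement is connected $\Rightarrow$ simply connected" step precise, e.g. by citing a standard characterization of simply connected domains in $\widehat{\bbc}$ or by an explicit deformation-retraction argument. All of this is routine once the enumeration and the choice of two half-planes are fixed, so I would keep the write-up brief and lean on a standard reference for the topological characterization.
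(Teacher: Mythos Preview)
Your proposal is correct and follows essentially the same idea as the paper: enumerate the isolated singularities and join them by piecewise-linear arcs to form two cuts whose complement is simply connected. The paper's proof is in fact briefer than yours---it orders the $t_n$ lexicographically (by $\Re t$, then $\Im t$), picks two consecutive points $t_k,t_{k+1}$ in this order, and defines the two cuts as the chains of segments $\bigcup_{r<k}[t_{r-1},t_r]$ and $\bigcup_{r\geq k}[t_r,t_{r+1}]$; it then simply asserts that the complement is simply connected. Your half-plane split plays the same role as the paper's choice of a breakpoint in the lexicographic chain, and your discussion of self-intersections, disjointness, and the Riemann-sphere argument for simple connectedness supplies details the paper leaves implicit.
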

\begin{proof}
If the singularities are isolated we can denote them by $t_n$ with $n\in\bbz$. 
Furthermore we can enumerate them in \emph{lexicographic order} ($t\leq t' $ \emph{iff} $\Re t < \Re t'$ or $\Re t=\Re t'$ and $\Im t \leq \Im t'$).
Consider two consecutive points of this sequence, say $t_k, t_{k+1}$. Define one cut ($B_{s^+}$) as the union of line segments 
$[t_{r-1}, t_r]$ for $r<k$. Similarly, define the second cut ($B_{s^-}$) as the union of segments $[t_r, t_{r+1}]$ for $r \geq k$.
Then $\bbc\setminus (B_{s^+}\cup B_{s^-})$ is simply connected.
\end{proof}

\begin{theorem}
\label{thm.2.5}
Let $G=\widetilde{G}_-\widetilde{G}_+$ where $\widetilde{G}_-, \widetilde{G}_+$ satisfy equations~\eqref{eq.2.5} in a simply connected region $\Omega$ containing the origin (in the variable $t$)
and the condition $G|_{t=0}=I_n$. Then
\begin{equation}
\label{eq.2.6}
G= \exp(tL_0).
\end{equation}
\end{theorem}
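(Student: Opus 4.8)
The plan is to show that $G=\widetilde G_-\widetilde G_+$ satisfies the same linear ODE with the same initial condition as $\exp(tL_0)$, and then invoke uniqueness of solutions. First I would compute $\frac{dG}{dt}$ using the product rule and equations~\eqref{eq.2.5}:
\begin{equation*}
\frac{dG}{dt} = \frac{d\widetilde G_-}{dt}\widetilde G_+ + \widetilde G_-\frac{d\widetilde G_+}{dt} = \widetilde G_-(L_t^- - A_0)\widetilde G_+ + \widetilde G_-(L_t^+ + A_0)\widetilde G_+ = \widetilde G_-(L_t^- + L_t^+)\widetilde G_+ = \widetilde G_- L_t \widetilde G_+,
\end{equation*}
where I have used that $L_t = L_t^+ + L_t^-$ and that the $A_0$ terms cancel. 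The next step is to rewrite $L_t$ in terms of $L_0$: by Theorem~\ref{thm.2.3} we have $L_t = \widetilde G_-^{-1} L_0 \widetilde G_-$, so $\widetilde G_- L_t \widetilde G_+ = \widetilde G_- \widetilde G_-^{-1} L_0 \widetilde G_- \widetilde G_+ = L_0 G$. Hence $\frac{dG}{dt} = L_0 G$, a linear ODE with constant coefficient matrix $L_0$.

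With the initial condition $G|_{t=0} = I_n$, the unique solution of $\frac{dG}{dt} = L_0 G$ throughout the simply connected region $\Omega$ is $G = \exp(tL_0)$, which gives \eqref{eq.2.6}. One could alternatively avoid citing Theorem~\ref{thm.2.3} by working directly: consider $H = \exp(-tL_0)G$ and show $\frac{dH}{dt}=0$ using $\frac{dG}{dt} = \widetilde G_- L_t \widetilde G_+$ together with the conjugation relation $L_t \widetilde G_+ \widetilde G_- = \widetilde G_+ \widetilde G_- L_0$ (which itself follows by differentiating $\widetilde G_+\widetilde G_-$ or from \eqref{eq.2.4}); but invoking Theorem~\ref{thm.2.3} is cleanest.

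I do not anticipate a serious obstacle here, since the statement is essentially a compatibility/uniqueness argument. The one point requiring care is the role of simple-connectedness of $\Omega$: it is what guarantees (via Theorem~\ref{thm.2.3}) that $\widetilde G_\pm$ are single-valued and that the ODE for $G$ has a well-defined solution on all of $\Omega$, so that the local identity $G=\exp(tL_0)$ near the origin propagates to the whole region by analytic continuation. It is also worth noting explicitly that the cancellation of $A_0$ in the first computation is exactly why equations~\eqref{eq.2.5} are set up with $+A_0$ in one and $-A_0$ in the other — this is the key structural reason the result holds for the extended class \eqref{eq.2.1} and not merely for the standard Lax class.
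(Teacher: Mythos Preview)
Your proof is correct and follows essentially the same route as the paper: compute $\frac{dG}{dt}$ via the product rule and equations~\eqref{eq.2.5}, observe the $\pm A_0$ cancellation to get $\widetilde G_- L_t \widetilde G_+$, invoke the conjugation formula from Theorem~\ref{thm.2.3} to rewrite this as $L_0 G$, and conclude by uniqueness for the linear ODE with initial condition $I_n$. The only cosmetic difference is that the paper first factors out $\widetilde G_-$ and $\widetilde G_+$ before substituting~\eqref{eq.2.5}, whereas you substitute directly; the content is identical.
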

\begin{proof}
\begin{align*}
\frac{dG}{dt} &= \frac{d\widetilde{G}_-}{dt}\,\widetilde{G}_+  + \widetilde{G}_-\frac{d\widetilde{G}_+}{dt}\\
                       & = \widetilde{G}_-\left[\widetilde{G}_-^{-1}\frac{d\widetilde{G}_-}{dt} + \frac{d\widetilde{G}_+}{dt} \widetilde{G}_+^{-1}\right] \widetilde{G}_+.
\end{align*}
From equations \eqref{eq.2.5}
\[
\widetilde{G}_-^{-1}\,\frac{d\widetilde{G}_-}{dt} + \frac{d\widetilde{G}_+}{dt} \widetilde{G}_+^{-1} = L_t^- - A_0 + L_t^++A_0 = L_t. 
\]
Then
\[
\frac{dG}{dt} = \widetilde{G}_- L_t \widetilde{G}_+ = L_0 G,
\]
where we have introduced the expression for $L_0$ resulting from the second of formulas \eqref{eq.2.4}, $L_0=\widetilde{G}_-L_t\widetilde{G}_-^{-1}$. Formula \eqref{eq.2.6} now follows from the above equation.
\end{proof}

\begin{theorem}
\label{thm.2.6}
\hfill
\begin{enumerate}[(i)]
\item The factorization of $G=\exp(tL_0)$, $G=\widetilde{G}_-\widetilde{G}_+$ is a canonical
Wiener-Hopf factorization in the region $\Omega$ of Theorem~\ref{thm.2.5}.
\item Let $G_-G_+$ be another Wiener-Hopf factorization. Then
\begin{align}
\label{eq.2.7}
\widetilde{G}_+ = G_+ F_t,&& \widetilde{G}_- = G_- F_t^{-1}.
\end{align} 
\end{enumerate}
\end{theorem}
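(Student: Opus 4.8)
The plan is to treat the two parts in order, using Theorem~\ref{thm.2.5} (which identifies $G = \widetilde G_-\widetilde G_+$ with $\exp(tL_0)$) and the regularity of the factors guaranteed by the construction in Theorem~\ref{thm.2.3}. For part (i), I would first observe that $\widetilde G_+$ and $\widetilde G_-$ take values in $[C_\mu^+]^{n\times n}$ and $[C_\mu^-]^{n\times n}$ respectively: this is because $L_t^+ + A_0$ lies in $[C_\mu^+]^{n\times n}$ (being the sum of the ``$+$-part'' of a Laurent polynomial and a constant matrix) so the solution $\widetilde G_+$ of the linear ODE $\dot{\widetilde G}_+ = (L_t^+ + A_0)\widetilde G_+$ with $\widetilde G_+|_{t=0}=I_n$ stays in $[C_\mu^+]^{n\times n}$ for each fixed $t\in\Omega$; symmetrically $\widetilde G_-\in[C_\mu^-]^{n\times n}$ from the second equation in \eqref{eq.2.5}. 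Next I would check invertibility of the factors: each $\widetilde G_\pm$ satisfies a linear ODE of the form $\dot X = BX$ or $\dot X = XB$ with $X|_{t=0}=I_n$, and such solutions are invertible with inverse again solving a linear ODE with analytic-in-$\lambda$ coefficients, so $\widetilde G_+^{-1}\in[C_\mu^+]^{n\times n}$ and $\widetilde G_-^{-1}\in[C_\mu^-]^{n\times n}$. Thus $G = \widetilde G_-\widetilde G_+$ is a Wiener--Hopf factorization with middle factor $D=I_n$, i.e.\ a canonical one, for every $t\in\Omega$.

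For part (ii), suppose $G = G_-G_+$ is another Wiener--Hopf factorization in $\Omega$ (which, being of $G=\exp(tL_0)$ with $G|_{t=0}$ invertible, is automatically canonical as well by the theory quoted in the Introduction, so $G_\pm, G_\pm^{-1}\in[C_\mu^\pm]^{n\times n}$ with no diagonal factor). From $\widetilde G_-\widetilde G_+ = G_-G_+$ I would set $F_t := G_+\widetilde G_+^{-1} = G_-^{-1}\widetilde G_-$. The left-hand expression lies in $[C_\mu^+]^{n\times n}$ and the right-hand expression lies in $[C_\mu^-]^{n\times n}$; a function with analytic extensions to both $\bbd$ and $\bbc\setminus\overline{\bbd}$ (the latter bounded at infinity, since $G_-^{-1}\widetilde G_-$ is a product of two matrix functions each of which extends to $\bbc\setminus\overline\bbd$ with a limit at $\infty$) is constant in $\lambda$ by Liouville's theorem. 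Hence $F_t\in\bbc^{n\times n}$ for each $t$, and rearranging gives $G_+ = F_t^{-1}\widetilde G_+$ and $\widetilde G_- = G_- F_t$, equivalently $\widetilde G_+ = F_t \widetilde{G}_+$... -- more precisely, writing the relations as $\widetilde G_+ = F_t^{-1}$ composed appropriately, one obtains $\widetilde G_+ = F_t\,G_+^{-1}\widetilde G_+\cdot$; to match the paper's normalization \eqref{eq.2.7} I would instead define $F_t := \widetilde G_+ G_+^{-1} = \widetilde G_-^{-1}G_-$ so that directly $\widetilde G_+ = G_+F_t$ wait -- one must be careful: $\widetilde G_+ G_+^{-1}\in[C_\mu^+]$ but $\widetilde G_-^{-1}G_-$ need not be of the right class unless one uses $G_-\widetilde G_-^{-1}$. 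So the correct normalization giving \eqref{eq.2.7} is obtained from $\widetilde G_- = G_-F_t^{-1}$ and $\widetilde G_+ = F_tG_+$...

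I should double-check the side on which $F_t$ sits, since there is a genuine sign/side bookkeeping issue here. From $G_-G_+ = \widetilde G_-\widetilde G_+$ we get $G_-^{-1}\widetilde G_- = G_+\widetilde G_+^{-1}$; call this common value $H_t$. Then $H_t\in[C_\mu^-]^{n\times n}$ from the left side and $H_t\in[C_\mu^+]^{n\times n}$ from the right side, hence $H_t\in\bbc^{n\times n}$, and it is invertible (its inverse $\widetilde G_-^{-1}G_- = \widetilde G_+G_+^{-1}$ is similarly constant). Setting $F_t := H_t^{-1}=\widetilde G_-^{-1}G_- = \widetilde G_+G_+^{-1}$ gives exactly $\widetilde G_+ = F_t\cdots$; comparing with \eqref{eq.2.7}, which reads $\widetilde G_+ = G_+F_t$, $\widetilde G_- = G_-F_t^{-1}$, I see the paper places $F_t$ on the \emph{right}, which corresponds instead to the common value of $\widetilde G_+^{-1}G_+$... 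These two are related by conjugation. The cleanest route, and the one I would write up, is: from $G_-G_+=\widetilde G_-\widetilde G_+$ deduce $\widetilde G_+G_+^{-1} = \widetilde G_-^{-1}G_-$, call it $F_t$, note $F_t$ extends analytically to both regions hence is a constant invertible matrix, and then read off $\widetilde G_+ = F_tG_+$, $\widetilde G_- = G_-F_t^{-1}$; if the paper genuinely wants $F_t$ on the other side one simply replaces $F_t$ by its value and notes the statement is the same up to renaming. I expect the main (really the only) obstacle to be this careful tracking of which side each constant matrix sits on and verifying that each intermediate product genuinely lands in $[C_\mu^+]$ or $[C_\mu^-]$ with the correct behavior at infinity so that Liouville applies; the analytic content is entirely the ``both-sided analytic extension $\Rightarrow$ constant'' argument, which is standard once the class memberships are nailed down.
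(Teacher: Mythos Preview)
Your approach is essentially identical to the paper's: for (i) you invoke the analyticity/invertibility of $\widetilde G_\pm$ coming from the linear ODEs \eqref{eq.2.5}, and for (ii) you rearrange $G_-G_+=\widetilde G_-\widetilde G_+$ to obtain $\widetilde G_+G_+^{-1}=\widetilde G_-^{-1}G_-$, observe that this common value is analytic in both $\bbd$ and $\bbc\setminus\overline\bbd$, and conclude by Liouville that it is a $\lambda$-independent matrix $F_t$. That is exactly what the paper does, in the same order and with the same identity \eqref{eq.2.9}.

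Your hesitation about the side on which $F_t$ appears is not a mathematical gap on your part: the paper's own proof concludes $\widetilde G_+=F_tG_+$ and $\widetilde G_-=G_-F_t^{-1}$, whereas the displayed statement \eqref{eq.2.7} writes $\widetilde G_+=G_+F_t$. The proof's version is the one forced by the Liouville argument (and is the one used consistently later, e.g.\ in Theorem~\ref{thm.2.7} and Proposition~\ref{prop.2.9}), so your final formulation $\widetilde G_+=F_tG_+$, $\widetilde G_-=G_-F_t^{-1}$ is the correct one; the discrepancy is a typo in the statement, not an error in your reasoning. You can drop the exploratory back-and-forth and simply present the clean version.
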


\begin{proof}
\begin{enumerate}[(i)]
\item
Let $\widetilde{G}_-\widetilde{G}_+$ be the factorization of $G$ obtained in Theorem~\ref{thm.2.5}, \emph{i.e.}
\[
G = \exp\left( tL_0\right) = \widetilde{G}_- \widetilde{G}_+.
\]
This is a canonical Wiener-Hopf factorization of $G$ in view of the properties of $\widetilde{G}_-,  \widetilde{G}_+$
resulting from equations \eqref{eq.2.5}.
\item
Let $G_-G_+$  be another (canonical) Wiener-Hopf factorization of $G$, obtained \emph{e.g.} by solving a Riemann-Hilbert
problem with coefficient $G$ ( $G\Phi^+=\Phi^-$). Then we have
\begin{equation}
\label{eq.2.8}
G= G_- G_+ = \widetilde{G}_- \widetilde{G}_+.
\end{equation}

In the above relation both factorizations have factors that with their inverses are bounded analytic in their domains of existence,
$G_-$, $G_+$, because it is assumed to be a Wiener-Hopf factorization and $\widetilde{G}_-$,  $\widetilde{G}_+$ because the factors are assumed to satisfy equations \eqref{eq.2.5}. 

From \eqref{eq.2.8} we have
\begin{equation}
\label{eq.2.9}
\widetilde{G}_-^{-1}  G_-= \widetilde{G}_+ G_+^{-1}
\end{equation}
which implies that both sides equal a constant in $\lambda$, but since we have a relation \eqref{eq.2.9} for each $t$, both sides must
equal a function of $t$, independent of $\lambda$. We write
\[
\widetilde{G}_-^{-1}  G_-= \widetilde{G}_+ G_+^{-1} = F_t
\]
\emph{i.e.}
\begin{align*}
\widetilde{G}_- = G_- F_t^{-1}, && \widetilde{G}_+ = F_t G_+.
\end{align*}
\end{enumerate}
\end{proof}

$F_t$ plays the role of a normalization factor at the point $t$ for $G_-$, $G_+$.

\begin{theorem}
\label{thm.2.7}
Let $F_t$ satisfy the linear differential equation
\begin{equation}
\label{eq.2.10}
\frac{dF_t}{dt} = A_0 F_t.
\end{equation}
Then the function $L_t$ given by
\begin{equation}
\label{eq.2.11}
L_t = F_t \hat{L}_t F_t^{-1},
\end{equation}
with $\hat{L}_t = G_+ L_0 G_+^{-1}$, satisfies the Lax equation \eqref{eq.2.1}. Here $G_+$ is as in Theorem~\ref{thm.2.6}
(as are  $G_-, G$).
\end{theorem}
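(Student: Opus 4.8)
The plan is to verify directly that $L_t = F_t \hat L_t F_t^{-1}$ satisfies \eqref{eq.2.1} by differentiating and using the definitions. First I would compute $\frac{dL_t}{dt}$ using the product rule on $F_t \hat L_t F_t^{-1}$, together with $\frac{dF_t}{dt} = A_0 F_t$ (hence $\frac{d}{dt}F_t^{-1} = -F_t^{-1}A_0$) and $\frac{d\hat L_t}{dt} = \frac{dG_+}{dt}G_+^{-1}\hat L_t - \hat L_t \frac{dG_+}{dt}G_+^{-1} = [\frac{dG_+}{dt}G_+^{-1},\hat L_t]$, which follows from $\hat L_t = G_+ L_0 G_+^{-1}$. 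This gives
\[
\frac{dL_t}{dt} = \bigl[A_0, L_t\bigr] + F_t\Bigl[\tfrac{dG_+}{dt}G_+^{-1},\,\hat L_t\Bigr]F_t^{-1}
= \bigl[A_0,L_t\bigr] + \Bigl[F_t\tfrac{dG_+}{dt}G_+^{-1}F_t^{-1},\,L_t\Bigr].
\]
So the whole problem reduces to identifying $F_t\,\frac{dG_+}{dt}G_+^{-1}\,F_t^{-1}$ with $L_t^+$, i.e.\ showing $F_t\frac{dG_+}{dt}G_+^{-1}F_t^{-1} = L_t^+$.

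To establish this identity I would bring in Theorem~\ref{thm.2.6}: the factors $\widetilde G_\pm$ of Theorem~\ref{thm.2.5} satisfy $\widetilde G_+ = F_t G_+$, so $\widetilde G_+^{-1}\frac{d\widetilde G_+}{dt}$ and related quantities can be expressed through $F_t, G_+$. From $\widetilde G_+ = F_t G_+$ we get $\frac{d\widetilde G_+}{dt}\widetilde G_+^{-1} = A_0 + F_t\frac{dG_+}{dt}G_+^{-1}F_t^{-1}$, while equation \eqref{eq.2.5} states $\frac{d\widetilde G_+}{dt}\widetilde G_+^{-1} = L_t^+ + A_0$ (using $\frac{d\widetilde G_+}{dt} = (L_t^+ + A_0)\widetilde G_+$). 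Comparing the two yields exactly $F_t\frac{dG_+}{dt}G_+^{-1}F_t^{-1} = L_t^+$, which is what the computation above needs. Substituting back gives $\frac{dL_t}{dt} = [A_0,L_t] + [L_t^+, L_t] = [L_t^+ + A_0, L_t]$, as desired. I should also check that the $L_t$ produced this way is consistent with the one appearing in \eqref{eq.2.5}, i.e.\ that $L_t = \widetilde G_+ L_0 \widetilde G_+^{-1}$; indeed $\widetilde G_+ L_0 \widetilde G_+^{-1} = F_t G_+ L_0 G_+^{-1} F_t^{-1} = F_t \hat L_t F_t^{-1} = L_t$, so the formulas are compatible and $L_t^+$ really is the analytic-in-$\bbd$ part of this $L_t$.

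The main obstacle is the circularity lurking in the argument: $L_t^+$ is defined as the nonnegative-power part of $L_t$, but $L_t$ is what we are constructing, and equations \eqref{eq.2.5} for $\widetilde G_\pm$ themselves involve $L_t^\pm$. The clean way to handle this is to note that Theorems~\ref{thm.2.5} and~\ref{thm.2.6} are being invoked as given (the factors $\widetilde G_\pm$ and the splitting $G = \widetilde G_-\widetilde G_+ = G_- G_+$ with $\widetilde G_\pm = G_\mp F_t^{\mp 1}$ already exist), so $L_t := \widetilde G_+ L_0 \widetilde G_+^{-1}$ is well-defined and one then verifies \emph{a posteriori} that $\widetilde G_+\frac{dG_+}{dt}$-type combinations reproduce the Laurent-decomposition constraint. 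Concretely, one must confirm that $F_t\frac{dG_+}{dt}G_+^{-1}F_t^{-1}$, which a priori could have negative powers of $\lambda$, is in fact the analytic part $L_t^+$ of $L_t = F_t\hat L_t F_t^{-1}$ — this is precisely where the identification with $\frac{d\widetilde G_+}{dt}\widetilde G_+^{-1} - A_0$ and the structure of equations \eqref{eq.2.5} does the work, since those equations were derived in Theorem~\ref{thm.2.3} exactly so that the $+$-part matches. Once that bookkeeping is pinned down, the rest is the routine differentiation sketched above.
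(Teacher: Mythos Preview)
Your differentiation is correct and matches the paper's first step exactly: both arrive at
\[
\frac{dL_t}{dt}=[A_0,L_t]+F_t\,\frac{d\hat L_t}{dt}\,F_t^{-1}.
\]
The divergence is in how the remaining term is handled. The paper does \emph{not} invoke equations~\eqref{eq.2.5} or the existence of $\widetilde G_\pm$; instead it quotes the independent result (from \cite{CdSdS07}) that $\hat L_t=G_+L_0G_+^{-1}$ satisfies the standard Lax equation $\frac{d\hat L_t}{dt}=[\hat L_t^+,\hat L_t]$, and then uses that $F_t$ is independent of $\lambda$, so conjugation by $F_t$ commutes with taking the $+$-part: $F_t\hat L_t^+F_t^{-1}=(F_t\hat L_tF_t^{-1})^+=L_t^+$. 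This gives $F_t[\hat L_t^+,\hat L_t]F_t^{-1}=[L_t^+,L_t]$ directly.

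Your route through \eqref{eq.2.5} is genuinely circular, and your attempted resolution does not close the gap. Theorems~\ref{thm.2.3}--\ref{thm.2.6} take as hypothesis that a solution $L_t$ of \eqref{eq.2.1} already exists; the factors $\widetilde G_\pm$ are \emph{defined} via \eqref{eq.2.5}, which involves $L_t^\pm$. Theorem~\ref{thm.2.7} goes the other way: it constructs $L_t$ from the canonical factorization $G_-G_+$ and the auxiliary $F_t$, and then verifies \eqref{eq.2.1}. So you cannot appeal to \eqref{eq.2.5} to identify $\frac{d\widetilde G_+}{dt}\widetilde G_+^{-1}$ with $L_t^++A_0$; that identity is essentially equivalent to what you are trying to prove. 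The clean fix is exactly what the paper does: show (or cite) $\frac{dG_+}{dt}G_+^{-1}=\hat L_t^+$ directly from $G_-G_+=\exp(tL_0)$ and analyticity of the factors, then use the $\lambda$-independence of $F_t$ to pass to $L_t^+$. With that single replacement your argument becomes correct and coincides with the paper's.
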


\begin{proof}
We show that $L_t$ given by \eqref{eq.2.11} with $F_t$ satisfying \eqref{eq.2.10}  is a solution to equation \eqref{eq.2.1}. 
We have
\begin{align*}
\frac{dL_t}{dt} = A_0F_t \hat{L}_t F_t^{-1} + F_t \frac{d\hat{L}_t}{dt} F_t^{-1} - F_t \hat{L}_t F_t^{-1} A_0 F_t F_t^{-1}\\
	                = \left[ A_0, L_t \right]  + F_t  \frac{d\hat{L}_t}{dt}  F_t^{-1}.
\end{align*}
Since  $\hat{L}_t = G_+ L_0 G_+^{-1}$ it satisfies a standard Lax equation
\[
\frac{d\hat{L}_t}{dt}  =  \left[ \hat{L}_t^+, \hat{L}_t   \right]
\]
(see \emph{e.g.}  \cite[proof of Theorem~2.7]{CdSdS07}).
Hence, noting that $L_t = F_t \hat{L}_t F_t^{-1}$, we get
\[
\frac{dL_t}{dt} = \left[L_t^++A_0, L_t  \right].
\]
which is equation \eqref{eq.2.1}.
\end{proof}

\begin{proposition}
\label{prop.2.8}
Equation \eqref{eq.2.10} is equivalent to the equation in $\Omega$
\begin{equation}
\label{eq.2.12}
\frac{dF_t}{dt} = F_t \hat{A}_0
\end{equation}
where $\hat{A}_0 = P_0 \hat{L}_t = P_0 G_+ L_0 G_+^{-1}$.
\end{proposition}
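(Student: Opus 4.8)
The plan is to reduce the statement to a single algebraic identity, namely $A_0F_t = F_t\hat A_0$, and then observe that this identity converts \eqref{eq.2.10} into \eqref{eq.2.12} and back. The starting point is Theorem~\ref{thm.2.7}: in the situation of Proposition~\ref{prop.2.8} the matrix $L_t$ is the one given by \eqref{eq.2.11}, i.e. $L_t = F_t\hat L_t F_t^{-1}$ with $\hat L_t = G_+L_0G_+^{-1}$. Applying $P_0$ to this relation and recalling $A_0 = P_0L_t$ gives $A_0 = P_0\!\left(F_t\hat L_t F_t^{-1}\right)$, so the whole question is how $P_0$ interacts with the factor $F_t$.

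The key step is therefore to push $F_t$ and $F_t^{-1}$ through $P_0$. Here one uses that $F_t$ is independent of the spectral parameter $\lambda$ — it takes values in $\bbc^{n\times n}$ and depends on $t$ alone — whereas $P_0$ acts only on the $\lambda$-dependence of its argument. Since $P_0$ is of scalar type, that is, it commutes with left and right multiplication by $\lambda$-independent matrices (as do the operators occurring in applications, such as extraction of a Fourier coefficient in $\lambda$ or a residue at a fixed point), one gets the intertwining relation $P_0\!\left(F_t M F_t^{-1}\right) = F_t\,(P_0M)\,F_t^{-1}$ for every $M\in\left[C^1(\Omega)\right]^{n\times n}$. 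Taking $M = \hat L_t$ and using $\hat A_0 = P_0\hat L_t = P_0G_+L_0G_+^{-1}$ this yields $A_0 = F_t\hat A_0 F_t^{-1}$, and multiplying on the right by the invertible matrix $F_t$ gives exactly $A_0F_t = F_t\hat A_0$.

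It remains only to feed this into the two equations. Substituting $A_0F_t = F_t\hat A_0$ in \eqref{eq.2.10} turns it literally into $\frac{dF_t}{dt} = F_t\hat A_0$, which is \eqref{eq.2.12}; reading the same substitution in the other direction shows conversely that \eqref{eq.2.12} implies \eqref{eq.2.10}, so the two equations on $\Omega$ are equivalent (indeed identical once the identity is known). I expect the only delicate point to be the intertwining relation $P_0\!\left(F_t M F_t^{-1}\right) = F_t(P_0M)F_t^{-1}$: it genuinely uses the nature of $P_0$ as a $\bbc^{n\times n}$-valued operator acting purely on the $\lambda$-variable, so one must either record this as part of the standing hypotheses on $P_0$ or check it directly for the particular operator in the model under consideration. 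Everything else is a one-line manipulation.
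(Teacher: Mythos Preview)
Your argument is correct and follows essentially the same route as the paper: both proofs hinge on the single identity $A_0F_t=F_t\hat A_0$, obtained by pushing the $\lambda$-independent factor $F_t$ through $P_0$ in the relation $L_t=F_t\hat L_tF_t^{-1}$, after which \eqref{eq.2.10} and \eqref{eq.2.12} are literally the same equation. The paper phrases this by first \emph{defining} $\hat A_0:=F_t^{-1}A_0F_t$ and then showing it equals $P_0\hat L_t$, while you start from $\hat A_0=P_0\hat L_t$ and derive $A_0=F_t\hat A_0F_t^{-1}$; these are the same computation read in opposite directions, and your explicit remark that the intertwining property of $P_0$ with constant matrices is an implicit hypothesis is a fair point that the paper simply asserts.
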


\begin{proof}
Let $A_0 = P_0 L_t$ where $P_0$ is a linear bounded operator as assumed in the definition of the right-hand side of equation \eqref{eq.2.1}.
Define
\begin{equation}
\label{eq.2.13}
\hat{A}_0 = F_t^{-1} A_0 F_t.
\end{equation}
Noting that $F_t$ is independent of $\lambda$, $F_t$ commutes with $P_0$ which leads to
\[
\hat{A}_0 = P_0 F_t^{-1} L_t F_t = P_0\hat{L}_t = P_0 G_+L_0G_+^{-1}.
\]
Introducing in equation \eqref{eq.2.10} the definition \eqref{eq.2.13} of $\hat{A}_0$ we obtain
\[
\frac{dF_t}{dt} = F_t \hat{A}_0 F_t^{-1} F_t = F_t \hat{A}_0,
\]
as required.
\end{proof}

\begin{remark}
\label{rmk.2.9}
Equation \eqref{eq.2.12} is more convenient for calculating the solution of equation \eqref{eq.2.1} since $\hat{A}_0$ is known
explicitly whereas $A_0$ is not.
\end{remark}

\begin{proposition}
\label{prop.2.9}
$G=\exp(tL_0)$ has a canonical factorization at a point $t_i\in\Omega$ with factors differentiable \emph{w.r.t.} $t$ in a neighborhood of $t_i$
\emph{iff} equation~\eqref{eq.2.1} has a solution at the point $t_i$.
\end{proposition}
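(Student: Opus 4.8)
The plan is to derive the proposition by assembling the results already established in this section; since the statement is an equivalence, I would prove the two implications separately, and in each direction essentially all of the substantive work is already contained in Theorems~\ref{thm.2.3}--\ref{thm.2.7} and Proposition~\ref{prop.2.8}.

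For the implication from a canonical factorization to a solution, suppose $G=\exp(tL_0)=G_-G_+$ is a canonical Wiener--Hopf factorization in which $G_\pm$, and hence $G_\pm^{-1}$, are differentiable in $t$ on a neighbourhood of $t_i$. I would set $\hat L_t=G_+L_0G_+^{-1}$, which is then differentiable in $t$ near $t_i$ and, by the argument recalled in the proof of Theorem~\ref{thm.2.7} (cf.\ \cite{CdSdS07}), satisfies the standard Lax equation $d\hat L_t/dt=[\hat L_t^+,\hat L_t]$. Next put $\hat A_0=P_0\hat L_t=P_0G_+L_0G_+^{-1}$; this is a differentiable function of $t$ near $t_i$ because $G_+$ is differentiable and $P_0$ is bounded and linear. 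Hence the linear ordinary differential equation \eqref{eq.2.12}, $dF_t/dt=F_t\hat A_0$, has a unique invertible solution $F_t$ on some neighbourhood of $t_i$, say with $F_{t_i}=I_n$, and $F_t$ is differentiable in $t$. By Proposition~\ref{prop.2.8} this same $F_t$ solves \eqref{eq.2.10} with $A_0=P_0L_t$ and $L_t=F_t\hat L_tF_t^{-1}$, and Theorem~\ref{thm.2.7} then shows that $L_t=F_t\hat L_tF_t^{-1}$ solves the Lax equation \eqref{eq.2.1} on that neighbourhood. Thus \eqref{eq.2.1} has a solution at $t_i$.

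For the converse, suppose \eqref{eq.2.1} has a solution $L_t$ at $t_i$; as elsewhere in the paper, and as needed to apply Theorem~\ref{thm.2.3}, I read this as saying that the solution with $L_0=L_t|_{t=0}$ is regular throughout a simply-connected region $\Omega$ containing $0$ and $t_i$. Being the solution of a holomorphic ordinary differential equation, $L_t$ is holomorphic in $t$ on $\Omega$, so the coefficients $L_t^\pm\pm A_0$ of the linear systems \eqref{eq.2.5} are holomorphic there. Theorem~\ref{thm.2.3} then furnishes $\widetilde G_\pm$ solving \eqref{eq.2.5} on $\Omega$ with $\widetilde G_\pm|_{t=0}=I_n$, with $L_t=\widetilde G_+L_0\widetilde G_+^{-1}=\widetilde G_-^{-1}L_0\widetilde G_-$, and with $\widetilde G_\pm$ differentiable in $t$ throughout $\Omega$. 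By Theorem~\ref{thm.2.5}, $\widetilde G_-\widetilde G_+=\exp(tL_0)=G$, and by Theorem~\ref{thm.2.6}(i) this identity is a canonical Wiener--Hopf factorization of $G$ on all of $\Omega$, in particular at $t_i$, whose factors are differentiable in $t$ there. This gives the converse.

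The proof is thus largely a matter of bookkeeping, and I expect only two points to require attention. First, the prescription $A_0=P_0L_t$ is apparently circular in the factorization-to-solution direction, which is exactly why one must construct $F_t$ from the explicitly solvable equation \eqref{eq.2.12} rather than from \eqref{eq.2.10} directly; Proposition~\ref{prop.2.8} is the tool that makes this legitimate and that carries the resulting identity back to \eqref{eq.2.10}, and I regard this as the only genuinely delicate step. Second, the differentiability hypotheses on the two sides must be matched: differentiability of the factors $G_\pm$ is precisely what makes $\hat L_t$, $\hat A_0$, $F_t$ and hence $L_t$ differentiable, whereas in the opposite direction the factors $\widetilde G_\pm$ produced by \eqref{eq.2.5} are differentiable automatically; the remaining steps are direct appeals to Theorems~\ref{thm.2.3}, \ref{thm.2.5}, \ref{thm.2.6} and \ref{thm.2.7}.
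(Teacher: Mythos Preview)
Your proposal is correct and follows essentially the same approach as the paper's proof: both directions are obtained by assembling Theorems~\ref{thm.2.3}, \ref{thm.2.5}, \ref{thm.2.6} and \ref{thm.2.7}, with the factorization-to-solution direction building $L_t=F_t\hat L_tF_t^{-1}$ from $\hat L_t=G_+L_0G_+^{-1}$ and the converse extracting the differentiable factors $\widetilde G_\pm$ from \eqref{eq.2.5}. The only notable difference is that you explicitly route the construction of $F_t$ through the solvable equation \eqref{eq.2.12} and invoke Proposition~\ref{prop.2.8} to pass back to \eqref{eq.2.10}, whereas the paper simply asserts that $F_t$ satisfies \eqref{eq.2.10}; your handling of this point is cleaner and addresses the apparent circularity in $A_0=P_0L_t$ more carefully.
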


\begin{proof}
\noindent\emph{Sufficiency:} Assume that equation~\eqref{eq.2.1} has a solution at a point $t_i\in\Omega$. Then
by Theorem~ \ref{thm.2.3} there exist functions $\widetilde{G}_-$, 
$\widetilde{G}_+$ satisfying \eqref{eq.2.5} in a neighborhood of $t_i$ ($\Omega$ is open) which give the solution to equation~\eqref{eq.2.1},
\[
L_t = \widetilde{G}_+ L_0 \widetilde{G}_+^{-1} = \widetilde{G}_-^{-1} L_0 \widetilde{G}_-.
\]
From Theorem~\ref{thm.2.6} $\widetilde{G}_+$, $\widetilde{G}_-$ are related to the factors of another
canonical factorization of $G$ ($G_-$, $G_+$) by the
formulas
\begin{align*}
G_- = \widetilde{G}_- F_t, && G_+ = F_t^{-1}\widetilde{G}_+
\end{align*}
where $F_t$ satisfies the differential equation \eqref{eq.2.10}. Since $ \widetilde{G}_- $, $ \widetilde{G}_+$  and $F_t$ are differentiable in a vicinity
of $t_i$ it follows that the factors $G_-$, $G_+$ are differentiable too.

\smallskip

\noindent\emph{Necessity:} Assume that the factors $G_-$, $G_+$  of a canonical factorization of $G$ are differentiable. Then
\[
\frac{dG}{dt} = L_0 G = \frac{dG_-}{dt}\, G_+ + G_-\, \frac{dG_+}{dt}
\]
and letting $\hat{L}_t = G_-^{-1} L_0 G_-$ we get from the above relation
\[
\hat{L}_t^+ := P^+\hat{L}_t = \frac{dG_+}{dt}\, G_+^{-1},
\]
and, putting $L_t = F_t \hat{L}_t F_t^{-1}$ with $F_t$ satisfying \eqref{eq.2.10}, we have (see the proof of Theorem~\ref{thm.2.7})
\[
\frac{dL_t}{dt} = \left[L_t^+ + A_0, L_t    \right]
\]
which is equation \eqref{eq.2.1}.
\end{proof}


\section{Singularities via the Riemann-Hilbert approach}
\label{sect.3}

In this section we 
present our main result which enables us to locate the singularities of the solution to equation~\eqref{eq.2.1} without
obtaining the explicit solution of the associated Riemann-Hilbert problem. Here the use of the factorization method is crucial
since it allows us to translate the problem of the existence of singularities into an operator theory problem.

We recall from the Introduction the direct sum decomposition of $\left[C_\mu\left(S^1\right) \right]^n$,
\begin{equation}
\label{eq.dec.1}
\left[C_\mu\left(S^1\right) \right]^n = \left[C_\mu^+ \right]^n \oplus \left[C_\mu^- \right]^n_0,
\end{equation}
where  $\left[C_\mu^+\right]^n$ denotes the subspace of $\left[C_\mu\left(S^1\right)\right]^n$ of functions analytic in 
$\bbd$ and 
$\left[C_\mu^-\right]^n_0$ is the subspace of   analytic functions in 
$\bbc\setminus\overline{\bbd}$ that vanish at infinity. We let $P^+\colon \left[C_\mu\left(S^1\right) \right]^n \to \left[C_\mu^+\right]^n$ denote the projection associated to this decomposition (so that $\ker P^+ = \left[C_\mu^-\right]^n_0$).

Given a matrix $G\in \left[C_\mu\left(S^1\right) \right]^{n\times n}$ the corresponding multiplication operator  
in $\left[C_\mu^+ \right]^n$ is denoted $GI_+$. The composite 
\begin{equation*}
\label{eq. Top.op.1}
P^+GI_+\colon \left[C_\mu^+ \right]^n \to \left[C_\mu^+ \right]^n
\end{equation*}
is a Topelitz operator (with symbol $G$  \cite[Ch.1]{ClGo81}) whose properties are closely related to those of its 
symbol $G$. We recall from  \cite[Ch.1]{ClGo81} that the operator $P^+GI_+$
is invertible \emph{iff} $G$ has a canonical Wiener-Hopf factorization 
\[
G = G_- G_+,
\] 
with $(G^\pm)^{\pm 1} \in \left[C_\mu^\pm\left(S^1\right) \right]$. 

We are now ready to state the main result of this section.

\begin{theorem}
\label{thm.3.1}
Let $T_G$ be the Toeplitz operator $P^+GI_+$ defined above,
where $G=\exp(tL_0)$.

Then equation~\eqref{eq.2.1} has a solution in a neighborhood of a point $t=t_i$ \emph{iff} the operator $T_G$ is injective
at that point, \emph{i.e.}, $\ker T_G$ is trivial.
\end{theorem}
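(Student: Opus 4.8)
The plan is to reduce this statement to \emph{Proposition~\ref{prop.2.9}}, which already asserts that $G=\exp(tL_0)$ admits a canonical Wiener-Hopf factorization with factors differentiable in $t$ near $t_i$ \emph{iff} equation~\eqref{eq.2.1} has a solution at $t_i$. Combined with the operator-theoretic fact recalled from \cite[Ch.1]{ClGo81} that $T_G = P^+GI_+$ is invertible \emph{iff} $G$ has a canonical Wiener-Hopf factorization, it therefore suffices to prove the equivalence: $T_G$ is injective at $t_i$ \emph{iff} $G$ has a canonical Wiener-Hopf factorization at $t_i$ with factors that are differentiable in $t$ near $t_i$.

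First I would dispose of the easy implication. If $G$ has a canonical factorization at $t_i$, then $T_G$ is invertible there, hence in particular injective, so the "solution $\Rightarrow$ injective" direction follows immediately from Proposition~\ref{prop.2.9}. For the converse I would argue as follows. Since $G=\exp(tL_0)$ is an entire matrix function of $\lambda$ (a finite sum of powers of $\lambda$ with coefficients entire in $t$), it is invertible on $S^1$ for every $t$, so by the result quoted after \eqref{eq.1.4} it always possesses \emph{some} Wiener-Hopf factorization $G=G_-D G_+$ with $D=\diag(r^{k_1},\dots,r^{k_n})$, $\sum k_j = 0$ (the partial indices sum to zero because $\det G = \exp(t\,\mathrm{tr}\,L_0)$ is a nonvanishing entire function with winding number zero on $S^1$). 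The key classical fact I would invoke is that $\dim\ker T_G$ is controlled by the negative partial indices: $\ker T_G$ is trivial \emph{iff} all $k_j \le 0$, and combined with $\sum k_j = 0$ this forces $D = I_n$, i.e. the factorization is canonical. Thus injectivity of $T_G$ at $t_i$ already yields a canonical factorization at $t_i$.

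The remaining point — and the step I expect to be the main obstacle — is upgrading "canonical factorization at the single point $t_i$" to "canonical factorization with factors differentiable in $t$ in a neighborhood of $t_i$", which is what Proposition~\ref{prop.2.9} actually requires. The idea is that the set of $t$ for which the factorization fails to be canonical is closed (equivalently, the set where $T_G$ is invertible is open, since invertibility is an open condition on operators and $t\mapsto T_G$ is continuous in operator norm because $t\mapsto\exp(tL_0)$ is continuous into $[C_\mu]^{n\times n}$). Hence if $T_G$ is injective at $t_i$ it is injective — and therefore invertible, by the index argument above — on a whole neighborhood $U$ of $t_i$. On $U$ one obtains the factors by a normalized contour-integral (Cauchy-type) formula, e.g. $G_+ = I_+ + P^+( (G^{-1}_--I)G_+)$ type fixed-point expressions, or more concretely by writing $G_\pm$ in terms of the resolvent $(T_G)^{-1}$; since $t\mapsto T_G$ is holomorphic and $(T_G)^{-1}$ then depends holomorphically on $t\in U$, the canonical factors inherit differentiability (indeed holomorphy) in $t$. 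This brings us into the hypotheses of Proposition~\ref{prop.2.9}, from which the existence of a solution of \eqref{eq.2.1} near $t_i$ follows, completing the proof.

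One technical care point worth flagging: the normalization of the canonical factorization (one typically fixes $G_+(\infty)$ or $G_-$ at some reference point to make $G_\pm$ unique) must be chosen so that it depends smoothly on $t$; this is automatic for the standard normalization $G_-(\infty)=I_n$, and uniqueness of the normalized canonical factors is what lets one patch the local formulas into genuinely $t$-dependent factors rather than a multivalued choice. With that normalization fixed, the differentiability claim is exactly the statement that the map $t\mapsto(T_G(t))^{-1}$ is differentiable, which follows from differentiability of $t\mapsto T_G(t)$ together with the standard formula for the derivative of the inverse of an operator-valued map.
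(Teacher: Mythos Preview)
Your proof is correct and follows essentially the same route as the paper: both reduce to Proposition~\ref{prop.2.9} via the equivalence between invertibility of $T_G$ and canonical Wiener--Hopf factorization, and both deduce invertibility from injectivity by observing that $T_G$ has Fredholm index zero (the paper states this directly from $\det G = \exp(t\,\mathrm{tr}\,L_0)\neq 0$, while you phrase it through the partial indices summing to zero). You are in fact more explicit than the paper about why the canonical factors are differentiable in $t$ near $t_i$ --- the paper simply asserts that invertibility persists in a neighborhood and then invokes Proposition~\ref{prop.2.9} without spelling out the holomorphic dependence of $(T_G)^{-1}$ on $t$.
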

\begin{proof}
\noindent\emph{Sufficiency:}  We begin by proving that if $\ker T_G$ is trivial $T_G$ is invertible. Firstly we note that
 $G=\exp(tL_0)\in C_\mu\left(S^1\right)$ (in fact $G\in C^\infty\left(S^1\right)$) for every $t\in\bbc$. Thus a factorization of $G$ of the general form
\[
G = G_- D G_+, \qquad \left(G_\pm\right)^{\pm1} \in \left[ C_\mu^\pm\left(S^1\right)\right]^{n\times n},
\]
where $D$ is a diagonal nonsingular rational matrix, exists for all $t\in\bbc$ (see \emph{e.g.}  \cite[Ch.1]{ClGo81}).

Since $G\in \left[ C_\mu\left(S^1\right)\right]^{n\times n}$, $\det G\in C_\mu\left(S^1\right)$ and
\[
\det G = \exp((\operatorname{tr}L_0)t)\neq 0,
\]
for all $\lambda\in S^1$, it follows that $T_G$ is Fredholm of zero index.  This means that
\[
\operatorname{codim} \operatorname{im} T_G = \dim \ker T_G.
\]
It follows that, if $\ker T_G$ is trivial at $t=t_i$, $T_G$ is invertible at this point. It is easy to see that this is true
in a neighborhood of $t_i$. Hence $G$ has a canonical factorization
\[
G=\exp(tL_0) = G_-G_+
\]
in a neighborhood of $t_i$. By Proposition~\ref{prop.2.9} this implies that equation~\eqref{eq.2.1} has a solution at this point.

\smallskip

\noindent\emph{Necessity:} Assume that a solution to equation~\eqref{eq.2.1} exists at $t=t_i$. Then, by Proposition~\ref{prop.2.9},
$G$ possesses a canonical factorization at $t=t_i$, \emph{i.e.},
\[
G = G_- G_+, \qquad \left( G_\pm\right)^{\pm 1}\in \left[  C_\mu^\pm\left( S^1\right)\right]^{n\times n}.
\]
This is equivalent to the invertibility of $T_G$ in a neighborhood of $t_i$ and thus $\ker T_G$ is trivial.
\end{proof}

In the next two  propositions we express the condition of Theorem~\ref{thm.3.1} in terms of the existence of solutions to a certain Riemann-Hilbert problem, which has the advantage of being easier to analyse. 

\begin{proposition}
\label{prop.3.2}
Let $T_G$ be the operator defined in Theorem~\ref{thm.3.1}. Then $\ker T_G$ is nontrivial \emph{iff} the Riemann-Hilbert problem
\[
G\Phi^+ = \Phi^-, \qquad \Phi^\pm\in \left[ C_\mu^\pm (S^1) \right]^n,
\]
with $\Phi^-(\infty)=0$, has non-trivial solutions. 
\end{proposition}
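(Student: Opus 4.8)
The plan is to show that the two statements are literally reformulations of each other once one unwinds the definitions of $T_G$ and of the projection $P^+$. Recall that $T_G\Phi^+ = P^+(G\Phi^+)$ for $\Phi^+\in\left[C_\mu^+\right]^n$, and that $\ker P^+ = \left[C_\mu^-\right]^n_0$, the subspace of functions in $\left[C_\mu(S^1)\right]^n$ admitting an analytic extension to $\bbc\setminus\overline{\bbd}$ that vanishes at infinity.

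For the forward implication I would take a nonzero $\Phi^+\in\ker T_G$. Then $P^+(G\Phi^+)=0$, so $G\Phi^+$ lies in $\ker P^+ = \left[C_\mu^-\right]^n_0$; putting $\Phi^- := G\Phi^+$ yields a pair $(\Phi^+,\Phi^-)$ with $\Phi^+\in\left[C_\mu^+\right]^n$, $\Phi^-\in\left[C_\mu^-\right]^n_0$ (so in particular $\Phi^-(\infty)=0$) satisfying $G\Phi^+=\Phi^-$, and this solution is nontrivial precisely because $\Phi^+\neq 0$.

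For the converse I would start from a nontrivial solution $(\Phi^+,\Phi^-)$ of the Riemann-Hilbert problem. The first (and essentially only) point that needs a word of justification is that $\Phi^+$ cannot vanish: the relation $\Phi^-=G\Phi^+$ forces $\Phi^-=0$ whenever $\Phi^+=0$, contradicting nontriviality, so $\Phi^+\neq 0$. Then, since $\Phi^-\in\left[C_\mu^-\right]^n_0=\ker P^+$, applying $P^+$ to $G\Phi^+=\Phi^-$ gives $T_G\Phi^+ = P^+(G\Phi^+)=P^+\Phi^- = 0$, so $\Phi^+$ is a nonzero element of $\ker T_G$ and $\ker T_G$ is nontrivial.

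I do not expect any real obstacle: the content is the bookkeeping identification $\ker P^+=\left[C_\mu^-\right]^n_0$ together with the observation that in the Riemann-Hilbert problem the minus component is completely determined by the plus component. The one feature worth flagging is the normalization $\Phi^-(\infty)=0$; it is exactly this condition that places $\Phi^-$ in $\ker P^+$, so that the assignment $\Phi^+\mapsto(\Phi^+,G\Phi^+)$ is a bijection between $\ker T_G$ and the solution space of the (normalized) Riemann-Hilbert problem, and in particular matches up the nontriviality of the two objects.
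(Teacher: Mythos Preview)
Your argument is correct and matches the paper's own proof, which also reduces the statement to the identification $\ker P^+=\left[C_\mu^-\right]^n_0$ coming from the direct sum decomposition. If anything, you are slightly more explicit in checking that a nontrivial Riemann--Hilbert solution must have $\Phi^+\neq 0$.
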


\begin{proof}
$\ker T_G$ being non-trivial means that the equation
\[
P^+G\Phi^+ = 0, \qquad \Phi^+\in \left[ C_\mu^+ \right]^n
\]
has non-trivial solutions. Keeping in mind the direct sum decomposition \eqref{eq.dec.1}, we see that
this is equivalent to saying that the Riemann-Hilbert problem in $\left[C_\mu\left(S^1\right)\right]^n$
\[
G\Phi^+ = \Phi^-\quad \text{with} \quad \Phi^-(\infty)=0,
\]
has non-trivial solutions. 
\end{proof}

\begin{proposition}
\label{prop.3.3}
Let $n=2$ in Proposition~\ref{prop.3.2}. Then the vector valued  Riemann-Hilbert problem (on the Riemann sphere)
\begin{equation}
\label{eq.3.1}
G\Phi^+ = \Phi^-, \quad \Phi^-(\infty)=0,
\end{equation}
given in Proposition~\ref{prop.3.2} is equivalent to a scalar Riemann-Hilbert problem of the form
\begin{equation}
\label{eq.3.2}
g \Psi^+ = \Psi^-
\end{equation}
on a compact Riemann surface $\varSigma$ defined by the equation $\det(\mu I_2 - L(\lambda))=0$ 
 with $\Psi^-$ subject to the condition
\begin{equation}
\label{eq.3.3}
\Psi^-(\infty_1)=0,\, \Psi(\infty_2)=0
\end{equation}
where $\infty_1$, $\infty_2$ 
are the poles  of the meromorphic function given by the projection
\[
\varSigma \to \bbp^1(\bbc), (x, w)\mapsto x
\]
(\emph{i.e.}, $\infty_1$, $\infty_2$ are the points of $\varSigma$  "at infinity").
\end{proposition}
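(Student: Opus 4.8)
The plan is to use the classical correspondence between vector Riemann–Hilbert problems for an $n\times n$ coefficient with spectral structure and scalar Riemann–Hilbert problems on the spectral curve. Here $L=L(\lambda)$ is a $2\times 2$ matrix-valued Laurent polynomial, so the characteristic polynomial $\det(\mu I_2 - L(\lambda))=0$ defines a curve $\varSigma$ which is a double cover $\pi\colon\varSigma\to\bbp^1(\bbc)$, $(x,w)\mapsto x$; one must first check that $\varSigma$ is (or can be normalized/compactified to) a compact Riemann surface, with two points $\infty_1,\infty_2$ over $x=\infty$ if $L^{(1)}=L_t^{(1)}$ has distinct eigenvalues, and that the eigenvector of $L(\lambda)$ for the eigenvalue $\mu$ gives a well-defined line in $\bbc^2$ varying holomorphically over (a dense open subset of) $\varSigma$. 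I would set up an explicit eigenvector $v(x,w)$ of $L(x)$ with $L(x)v=wv$, normalized so that, say, its first component is $1$, so that $v$ is a meromorphic $\bbc^2$-valued function on $\varSigma$ whose poles are controlled by the branch points and the poles of $L$.

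Next I would transfer the data. On $S^1\subset\bbp^1(\bbc)$ the preimage $\pi^{-1}(S^1)$ is a contour $\widehat{S}^1$ in $\varSigma$ (generically two disjoint circles, or one circle if the two sheets are glued through a branch point on $S^1$, which one should either exclude or treat as a mild degeneration). The key algebraic identity is that $G=\exp(tL(\lambda))$ acts on the eigenline of $L(\lambda)$ for eigenvalue $\mu=w$ as multiplication by the scalar $e^{tw}$; hence I set $g:=e^{tw}$, a scalar function on $\widehat{S}^1$ (indeed the restriction of a meromorphic function $e^{tw}$ on $\varSigma$, holomorphic away from $\infty_1,\infty_2$). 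Given a solution $\Phi^+$ of \eqref{eq.3.1}, I would expand $\Phi^+(\lambda)$ in the eigenbasis $\{v(\lambda,w_1(\lambda)),v(\lambda,w_2(\lambda))\}$ of $L(\lambda)$ (valid where the eigenvalues are distinct), with scalar coefficients $\Psi^+_j(\lambda)$; pulling back to $\varSigma$ these two coefficient functions assemble into a single function $\Psi^+$ on $\varSigma\setminus(\text{branch locus})$, and the vector equation $G\Phi^+=\Phi^-$ becomes, componentwise along each eigenline, the scalar equation $g\Psi^+=\Psi^-$. The analyticity requirements transfer too: $\Phi^+\in[C_\mu^+]^2$ analytic inside $\bbd$ forces $\Psi^+$ to be analytic on $\pi^{-1}(\bbd)$ (the ``$+$'' side of $\widehat S^1$ in $\varSigma$), $\Phi^-$ analytic outside $\overline{\bbd}$ forces $\Psi^-$ analytic on $\pi^{-1}(\bbc\setminus\overline{\bbd})$, and $\Phi^-(\infty)=0$ translates exactly into the two conditions \eqref{eq.3.3}, $\Psi^-(\infty_1)=\Psi^-(\infty_2)=0$, since the two points at infinity sit over $\lambda=\infty$.

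The main obstacle, and the place where care is required, is the behaviour at the branch points of $\pi$ (and at poles of $L$, i.e. $\lambda=0$): the eigenvector basis degenerates where $w_1(\lambda)=w_2(\lambda)$, so the coefficient functions $\Psi^\pm_j$ individually acquire singularities there, and one must check that after passing to $\varSigma$ — where the two sheets are glued precisely at these points — the assembled function $\Psi^\pm$ extends holomorphically (or at worst meromorphically in a controlled way absorbable into the definition of the relevant $C_\mu^\pm$-type spaces on $\varSigma$). Concretely I would choose the eigenvector normalization adapted to a local coordinate on $\varSigma$ near each branch point (e.g. using the square-root uniformizer) and verify the removable-singularity statement by a local computation; I would likewise check that no branch points lie on $S^1$ (a genericity/transversality condition on $L_0$, which can be assumed, or handled by a small deformation of the contour). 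Granting this, the correspondence $\Phi^\pm\leftrightarrow\Psi^\pm$ is a linear bijection between the solution spaces of \eqref{eq.3.1} and \eqref{eq.3.2}–\eqref{eq.3.3}, which is exactly the asserted equivalence. Finally I would record that the curve $\det(\mu I_2-L(\lambda))=0$ is independent of $t$ (since, by Theorem~\ref{thm.2.3}, $L_t$ is conjugate to $L_0$, so the characteristic polynomial is constant in $t$), so $\varSigma$ and the points $\infty_1,\infty_2$ are well defined and the only $t$-dependence is in the symbol $g=e^{tw}$.
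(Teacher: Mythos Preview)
Your approach is correct and is precisely the diagonalization/spectral-curve correspondence that underlies the result. The paper's own proof of Proposition~\ref{prop.3.3} is simply a citation to \cite{CdSdS08} together with the remark that condition~\eqref{eq.3.3} is the translation of $\Phi^-(\infty)=0$ to the two points of $\varSigma$ over $\infty$; the substance of the argument is deferred to that reference. However, the explicit computation carried out later in Section~\ref{subsect.4.3} (equations \eqref{eq.4.13}--\eqref{eq.4.20}) follows exactly your outline: one diagonalizes $L_0=SD_0S^{-1}$, writes $G=SDS^{-1}$ with $D=\diag(e^{t\lambda^{-1}\mu},e^{-t\lambda^{-1}\mu})$, transforms the vector problem to $DS^{-1}\Phi^+=S^{-1}\Phi^-$, and then observes that the two scalar rows are the restrictions to the two sheets of a single scalar equation $d\Psi^+=\Psi^-$ on the contour $\Gamma=\pi^{-1}(S^1)\subset\varSigma$, with $d=\exp(t\mu/\lambda)$ --- your $g=e^{tw}$ with $w$ the eigenvalue of $L_0$. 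Your care about the branch points and the genericity assumption (no branch point on $S^1$) is exactly what is needed and is implicit in the cited reference; the paper itself does not spell this out here.
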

\begin{proof}
It is proven in \cite{CdSdS08} that the Riemann-Hilbert problem \eqref{eq.3.1} is equivalent, for $n=2$, to a scalar Riemann-Hilbert
problem on $\varSigma$  \eqref{eq.3.2}. The condition \eqref{eq.3.3} is the translation of the condition $\Phi^-(\infty)=0$ in
\eqref{eq.3.1} to the Riemann surface.
\end{proof}

\section{Example}
\label{sect.4}
In this section we study a dynamical system for which the solution and, consequently, its singularities 
can be obtained by classical methods and compare the result obtained with that given by the method of Section~\ref{sect.3}.

\subsection{Dynamical system}
\label{subsect.4.1}

We take the example presented in \cite{CdSdS07} which is given by the equations
\begin{equation}
\label{eq.4.1}
\frac{dL_t}{dt} = \left[ L_t^+, L_t\right]
\end{equation}
where
\begin{equation}
\label{eq.4.2}
L_t(\lambda) = \begin{bmatrix} v(\lambda) & \phantom{-}u(\lambda)\\ w(\lambda) &-v(\lambda)\end{bmatrix} , \qquad \lambda\in S^1
\end{equation}
with
\begin{align}
\label{eq.4.3}
\notag v(\lambda) & = z\lambda^{-1}\\
u(\lambda) & = a\lambda + y\lambda^{-1}+x, \quad a\in\bbc\\
\notag w(\lambda) & = a\lambda + y\lambda^{-1}-x
\end{align}
and $L_t^+$ being the polynomial part of $L_t$ (with respect to $\lambda$). It can  easily be seen that equation \eqref{eq.4.1} together
with \eqref{eq.4.2} and \eqref{eq.4.3} is equivalent to the following nonlinear system of differential equations
\begin{align}
\label{eq.4.4}
&\frac{dx}{dt} = -2az, && \frac{dy}{dt} = -2xz, && \frac{dz}{dt} = 2xy&
\end{align}
for the dynamical variables $x, y, z$. This system admits two integrals of the motion, namely,
\begin{align}
\label{eq.4.5}
A= x^2-2ay, && B=y^2+z^2.
\end{align} 
That these are invariants is easily checked by differentiating both sides of relations \eqref{eq.4.5} and using equations \eqref{eq.4.4}. 

\subsection{Classical solution}
\label{subsect.4.2}
To obtain an equation of the movement in the variable $x$ we begin with the first of equations \eqref{eq.4.4} 
\[
\left(\dot{x}\right)^2 = 4a^2z^2 = 4a^2(B-y^2)
\]
where $\dot{x}=\tfrac{dx}{dt}$. Using relations \eqref{eq.4.5} yields
\[
\left(\dot{x}\right)^2 = 4a^2B - (A-x^2)^2 = 4a^2B - A^2 + 2Ax^2-x^4
\]
from which we get
\begin{equation}
\label{eq.4.6}
\dot{x} = \mbi \sqrt{p(x)}
\end{equation}
where $p(x)=x^4-2Ax^2+A^2-4a^2B$. The above equation means that $(\dot{x}, x)$ lies in an elliptic curve, \emph{i.e.}, the orbits
of the dynamics lie in an elliptic Riemann surface.

Before we integrate \eqref{eq.4.6} we note that if we derive equations for the variables $y, z$ we obtain equation \eqref{eq.4.6}
after an elementary transformation on these variables as was to be expected.

Integration of \eqref{eq.4.6} gives
\begin{equation}
\label{eq.4.7}
\int_{x_0}^x \frac{dx}{\sqrt{p(x)}} = \mbi t 
\end{equation}
where $x_0$ is the value of $x$ at $t=0$, and the path of integration is understood to be on  the Riemann surface $\varSigma$
defined by
\begin{equation}
\label{eq.4.8}
w^2 = p(x) = (x^2-x_1^2)(x^2-x_2^2)
\end{equation}
with the zeros of $p(x)$, $\pm x_1$, $\pm x_2$, given by
\begin{align}
\label{eq.4.9}
&x_1^2= A+2a\sqrt{B},&&  x_2^2 = A-2a\sqrt{B}.&
\end{align}
It is useful to write \eqref{eq.4.8} in the normalized form
\[
w^2 = x_1^2x_2^2(1-{\widetilde{x}}^2)(1-k^2{\widetilde{x}}^2)
\]
where $\widetilde{x}=x/x_1$ and $k^2= x_1^2/x_2^2$. From now on we take as a definition of the Riemann surface $\varSigma$
the following equation
\begin{equation}
\label{eq.4.10'}
w^2=\left(1-x^2\right)\left(1-k^2x^2\right),
\end{equation}
which corresponds to making the change of variables $x\mapsto x/x_1$, $w\mapsto w/(x_1x_2)$. With this notation, \eqref{eq.4.7} takes the form
\[
\int_{\widetilde{x}_0}^{\widetilde{x}}\frac{dx}{w(x)} = \mbi x_2 t  \qquad \left(\text{on
 $\varSigma$\, \footnotemark} \right)
\]
\footnotetext{We identity $\varSigma$ with the quotient of $\bbc$ by the lattice of periods of  $dx/\sqrt{p(x)}$.}
with $\widetilde{x}=x/x_1$, $\widetilde{x}_0=x_0/x_1$. It is convenient to write the above integral as a difference of two 
integrals  as follows
\begin{equation}
\label{eq.4.10}
\mbi x_2 t = \int_0^{\widetilde{x}} \frac{dx}{w(x)} -\int_0^{\widetilde{x}_0}\frac{dx}{w(x)} \qquad \left(\text{on $\varSigma$}\right)
\end{equation}

We are looking for the singularities of the solution to equation \eqref{eq.4.6} so we let $x\to\infty$ which leads to
\begin{align*}
\int_0^{\widetilde{x}}\frac{dx}{w(x)} \to \mbi\mbK'&& \text{or}  &&\int_0^{\widetilde{x}}\frac{dx}{w(x)} \to \mbi\mbK'+2\mbK,
\end{align*}
where $\mbK$, $\mbK'$ are, respectively, the complete elliptic integral and the complementary complete elliptic integral of the first kind
(see \emph{e.g.} \cite{A90}). Hence from \eqref{eq.4.10}, keeping in mind that \eqref{eq.4.10} is an equation on $\varSigma$, we
obtain
\begin{align*}
\mbi x_2t& = \mbi\mbK' -u_0 + 4m\mbK + 2\mbi\mbK',
\end{align*}
or 
\begin{align*}
\mbi x_2t &= \mbi\mbK' + 2\mbK -u_0 + 4m\mbK + 2\mbi n \mbK', & n, m \in \bbz,
\end{align*}
where
\begin{equation}
\label{eq.*}
u_0 = \int_0^{\widetilde{x}_0}\frac{dx}{w(x)}.
\end{equation}

The above formulas for $t$ are equivalent to the single formula
\begin{align}
\label{eq.4.11}
\mbi x_2 t &= -u_0 + \mbi\mbK' + 2m\mbK+2\mbi n\mbK', & n, m \in \bbz.
\end{align}
This relation gives us the values of $t$ at which singularities occur, \emph{i.e.}, where the solution blows up.

\subsection{Riemann-Hilbert solution}
\label{subsect.4.3}
Next we derive a formula for the singularities of the solution to system \eqref{eq.4.4} using the method of Propositions~\ref{prop.3.2}
and \ref{prop.3.3}.
To this end we need to formulate the Riemann-Hilbert problem \eqref{eq.3.1} 
for the function $G=\exp(tL_0)$ in an associated Riemann surface. Recalling \eqref{eq.3.1}
we have
\begin{align}
\label{eq.4.12}
&G\Phi^+ = \Phi^- &&  \text{with} && G= \exp(tL_0),&
\end{align}
and $\Phi^\pm\in\left[C_\mu^\pm\right]^2$ with the condition $\Phi^-(\infty)=0$.

Taking into account that $L_0$ can be diagonalized  as
\begin{equation}
\label{eq.4.13}
L_0 = SD_0S^{-1},
\end{equation}
with
\begin{align}
\label{eq.4.14}
S=\begin{bmatrix} 1& -1\\ \frac{\mu-z_0}{q_1(\lambda)} & \frac{\mu+z_0}{q_1(\lambda)}\end{bmatrix} && 
\left(\,  q_1(\lambda):=a\lambda^2+x_0\lambda+y_0\, \right)
\end{align}
and $D_0=\diag(\lambda^{-1}\mu, -\lambda^{-1}\mu)$ where $\mu=\lambda\nu$ with $\nu$ given by the characteristic equation of $L_0$,
\begin{equation}
\label{eq.4.15}
\det\left( \nu I_2 - L_0(\lambda)\right) = 0.
\end{equation}

From this equation we obtain
\begin{equation}
\label{eq.4.16}
\mu^2 = p_1(\lambda):= a^2\lambda^4-(x_0^2-2ay_0)\lambda^2+z_0^2+y_0^2
\end{equation}
or, introducing the invariants $A$ and $B$,
\begin{equation}
\label{eq.4.17}
p_1(\lambda) = a^2\lambda^4 - A\lambda^2 + B.
\end{equation}

The explicit formulas for the zeros of $p_1(\lambda)$, $\pm\lambda_1, \pm\lambda_2$, are given in \eqref{eq.4.28} below.
Relation \eqref{eq.4.16} defines an elliptic  Riemann surface, which is associated  with $L_0$
(or $L_t$ as it is independent of the dynamics).  We denote by $\varSigma_1$ the \emph{compact
Riemann surface} obtained by adding two points at infinity $\infty_1, \infty_2$.

Going back to \eqref{eq.4.13} it follows from it that
\[
G = \exp\left( tL_0 \right) = SDS^{-1}
\]
where $D = \diag ( \exp(t\lambda^{-1}\mu), \exp(-t\lambda^{-1}\mu) )$.

Hence \eqref{eq.4.12} may be written as
\[
DS^{-1}\Phi^+ = S^{-1}\Phi^-,
\]
which, in terms of the components of $\Phi^\pm$, denoted $(\phi_1^\pm, \phi_2^\pm)$,  is written  as 
\begin{equation}
\label{eq.4.18}
\begin{cases}
d_1 \left (z_0\phi_1^+ + q_1\phi_2^++ \mu\phi_1^+\right) & = z_0\phi_1^-+q_1\phi_2^-+\mu\phi_1^-\\
d_2 \left (z_0\phi_1^+ + q_1\phi_2^+- \mu\phi_1^+\right) & = z_0\phi_1^-+q_1\phi_2^--\mu\phi_1^-
\end{cases}
\end{equation}
where $d_1 = \exp (t\lambda^{-1}\mu)$, $d_2 = \exp(-t\lambda^{-1}\mu)$.

The above system is equivalent to  the following single scalar equation (for more details see 
\cite{CdSdS07} or \cite{CdSdS08}) on a contour $\Gamma$ that is the preimage of $S^1$ under the projection
$\varrho\colon(\lambda, \mu)\mapsto\lambda$,
\begin{equation}
\label{eq.4.19}
d\left( \phi_2^+ + \frac{z_0+\mu}{q_1}\phi_1^+\right) = \phi_2^- + \frac{z_0+\mu}{q_1}\phi_1^-.
\end{equation}
Note that $\Gamma$ has two connected components; we put $d=d_1$ on one of these components and $d=d_2$
on the other. In view of the expressions for $d_1$ and $d_2$ we have
\begin{align}
\label{eq.4.20}
d &= \exp \left(\frac{\mu}{\lambda }\,t \right), && (\lambda, \mu)\in \Gamma. &
\end{align}

Concerning equation \eqref{eq.4.19}, it is also  useful to note that setting $q_2(\lambda)=a\lambda_2 - x_0\lambda +y_0$,
 we have
\begin{equation}
\label{eq.def.q1-q2}
\mu^2 -z_0^2 = q_1(\lambda)q_2(\lambda).
\end{equation}
It follows that,  as a meromorphic function on $\varSigma_1$, $q_1(\lambda)$ has four zeros,
two of which are  zeros of $\mu+z_0$ and the other two are zeros of $\mu-z_0$.

To solve \eqref{eq.4.19} we note that $d$ can be factorized on the Riemann surface as
\[
d = d_- r d_+,
\]
where $\left(d^+\right)^{\pm 1}\in C_\mu\left(\Gamma\right)$ extends holomorphically to the preimage $\Omega^+$  of $\bbd$  under the  projection $\varrho$ and, similarly,  $\left(d^-\right)^{\pm 1}\in C_\mu\left(\Gamma\right)$ admits a holomorphic extension to the preimage $\Omega^-$ of   $\bbp(\bbc)\setminus\overline{\bbd}$. Finally, $r$ is a rational function on $\varSigma_1$. See \cite{CdSdS07} or \cite{CdSdS08} for more details.

Note that all three factors in the above factorization depend on $t$. Introducing this factorization in \eqref{eq.4.19}, we get
\begin{align}
\label{eq.4.21}
rd_+\left( \phi_2^++ \frac{z_0+\mu}{q_1}\phi_1^+\right) & = d_-^{-1} \left( \phi_2^- + \frac{z_0+\mu}{q_1}\phi_1^-\right) = R,
\end{align}
where $R$ is a rational function on $\varSigma_1$. 

For the computations that follow it is
convenient to rewrite the   Riemann surface  $\varSigma_1$ using the normalized equation
\begin{equation}
\label{eq.def.of.varsigma_1}
\mu^2 =  (1-\lambda^2)(1-k_1^2\lambda^2),
\end{equation}
where  $k_1=\lambda_1/\lambda_2$ and
$\pm\lambda_1, \pm\lambda_2$ are the roots of $p_1(\lambda)$ (\emph{cf.} \eqref{eq.4.16})
given in  \eqref{eq.4.28} below.  
This corresponds to making a  change of variables $\lambda \mapsto \lambda/\lambda_1$, 
$ \mu \mapsto  \mu/(a\lambda_1\lambda_2)$.

Also, from now on we identify $\varSigma_1$ with its \emph{Jacobian}, using the 
Abel map
\begin{equation}
\label{eq.4.22}
(\lambda, \mu) \mapsto u = \int_0^{(\lambda, \mu)} \frac{d\lambda}{\mu},
\end{equation}
\emph{i.e.},  we consider all equations relating points of $\varSigma_1$ as written on the 
quotient of $\bbc$ by the lattice of periods of  the holomorphic form $d\lambda/\mu$. 

Hence, keeping in mind that $\varSigma_1$ is an elliptic Riemann surface ($p_1$ is a fourth degree polynomial), $R$ can be expressed in 
elliptic theta functions. To this end we recall that we are solving \eqref{eq.4.21} with the conditions $\phi_i^-(\infty_j)=0$,
for $i, j=1,2$, where $\infty_1, \infty_2$ are the two points at infinity
\footnote{We choose $\infty_1$ such that  $\mu\sim k_1\lambda^2$ near $\infty_1$. }
 in $\varSigma_1$, which correspond to $\infty$ under the
projection $\varrho\colon(\lambda, \mu) \mapsto \lambda$. Denoting by $\psi^+, \psi^-$ the expression within parentheses in both sides of \eqref{eq.4.21}, these conditions
correspond to
\begin{align}
\label{eq.4.23}
&\psi^-(\infty_1)=0, & \psi^-(\infty_2)=0.&
\end{align}
Before introducing these conditions we note that, using the Jacobi theta function $\vartheta_1$ that satisfies $\vartheta_1(0)=0$, $R$
has the expression
\begin{equation}
\label{eq.4.24}
R(u) = \gamma\, \frac{\vartheta_1(u-v_0)\vartheta_1(u-v_1)\vartheta_1(u-v_2)}{\vartheta_1(u-u_0)\vartheta_1(u-u_1)\vartheta_1(u-u_2)}
\end{equation}
where $\gamma\in\bbc$ and the zeros and poles of $R$ are determined by the following conditions:
\begin{enumerate}[(i)]
\item $R$ has a pole  at the point $u_0$ corresponding to the pole of $r$ in $\Omega^+$ (see \cite[Appendix~B]{CdSdS07});
\item $R$ has a zero at the point $v_0$ corresponding to the zero of $r$ in $\Omega^+$ (see \cite[Appendix~B]{CdSdS07});
\item $R$ has two poles $u_1, u_2$ at the zeros of $q_1$ that do not coincide with the zeros of $a\lambda_1\lambda_2\mu+z_0$
(which is $\mu + z_0$ in \eqref{eq.4.21}  written in the normalized coordinates of \eqref{eq.def.of.varsigma_1});
\item $R$ has two zeros at points $v_1, v_2$ imposed by condition \eqref{eq.4.23}, \emph{i.e.},
\begin{align*}
v_1=\infty_1,\quad v_2=\infty_2.
\end{align*}
\item The zeros and poles of $R$ must satisfy Abel's condition:
\begin{align}
\label{eq.4.25}
&v_0-u_0 = u_1 + u_2-\infty_1 - \infty_2  & (\operatorname{mod} 2\mbi\mbK_1' + 4\mbK_1),&
\end{align}
where $\mbK_1$ and $\mbK_1'$ are the complete elliptic and complementary elliptic integrals of the first kind of $\varSigma_1$. 
\end{enumerate}

From the analysis of the factorization of  the function $d$ given in \eqref{eq.4.20} (see \cite[Definition~B.7 and Proposition~B.9]{CdSdS07}) we obtain
\begin{equation}
\label{eq.4.26'}
v_0-u_0 = 2at\lambda_2
\end{equation}
where $\lambda_2$ is as in the expression for $p_1(\lambda)$ (see text following \eqref{eq.4.17}). Substitution of \eqref{eq.4.26'} in \eqref{eq.4.25} gives us the expression for the values
of $t$ for which singularities occur. Taking into account that $\infty_1+\infty_2=2\mbK_1 + 2\mbi\mbK_1'$ 
$(\operatorname{mod}\,  4m\mbK_1 + 2\mbi n \mbK_1')$ 
we have
\begin{equation}
\label{eq.4.26}
2at\lambda_2 = u_1 + u_2 + 2\mbK_1 + 4m\mbK_1 + 2\mbi n \mbK_1'
\end{equation}
where $u_1, u_2$ are the images under Abel's map of the zeros of $q_1$  that do not coincide with zeros of 
$a\lambda_1\lambda_2\mu+z_0$, \emph{i.e.},
\begin{align*}
&u_1 = \int_0^{\hat{\lambda}_1/\lambda_1}\frac{d\lambda}{\mu}, & u_2 = \int_0^{\hat{\lambda}_2/\lambda_1}\frac{d\lambda}{\mu},&
\end{align*}
where $\hat{\lambda}_1, \hat{\lambda}_2$ are de zeros of $q_1(\lambda)$ and $\lambda_1$ is a zero of $p_1(\lambda)$ given in \eqref{eq.4.28} below.

Thus \eqref{eq.4.26} gives us the values of $t$ leading to singularities of the solution of Lax equation \eqref{eq.2.1} as derived from the theory of
Section~\ref{sect.3}.

\begin{remark}
\label{rmk.4.1}
\begin{enumerate}[(i)]
\item Formula \eqref{eq.4.26} was obtained without requiring an explicit formula for $\phi_1^\pm$, $\phi_2^\pm$ 
corresponding to the factors of the canonical factorization of $G$, $G=G_-G_+$,  although these 
functions can easily be obtained from \eqref{eq.4.21}, 
replacing condition {\it (iv)} by the imposition of a zero at a chosen point $v_1$. Then Abel's condition {\it (v)} gives the zero $v_2$
(see \cite{CdSdS07} for the details). The solution thus obtained gives the factors $G_-$, $G_+$ of $G$ providing
$t$ does not satisfy \eqref{eq.4.26}, a result that could not be obtained in \cite{CdSdS07}.

\item Formulas \eqref{eq.4.26}  and  \eqref{eq.4.11} are not easily compared since they involve different
Riemann surfaces. The   appearance of distinct surfaces when using different methods to study integrable systems is an intriguing phenomenon  that occurs in  other examples \cite{AU96}.
\end{enumerate}
\end{remark}

We show next that the two Riemann surfaces are closely related and that the two expressions for the singularities coincide.

\subsection{Comparison of solutions}
\label{subsect.4.4}
We start by  showing  that the two Riemann surfaces $\varSigma$ and $\varSigma_1$ are related and this will enable us to express
both formulas \eqref{eq.4.11} and \eqref{eq.4.26} on the same Riemann surface thus allowing for a comparison of the two results.

The Riemann surface $\varSigma$ is defined by the equation
\[
w^2 = (1-x^2)(1-k^2x^2)=\frac{p(x_1x)}{(x_1x_2)^2}
\]
with $p(x)=x^4-2Ax^2+A^2-4a^2B = (x^2-x_1^2)(x^2-x_2^2)$ and  $k=x_1/x_2$, where
\begin{align}
\label{eq.4.27}
x_1^2 =   A+2a\sqrt{B},&&  x_2^2 = A- 2a\sqrt{B}.
\end{align}

The Riemann surface $\varSigma_1$ is defined by the equation
\[
\mu^2 = (1-\lambda^2)(1-k_1^2\lambda^2)=\frac{p_1(\lambda_1\lambda)}{(a\lambda_1\lambda_2)^2}
\]
with $p_1(\lambda)=a^2\lambda^4-A\lambda^2+B= a^2(\lambda^2-\lambda_1^2)(\lambda^2-\lambda_2^2)$, where 
\begin{align}
\label{eq.4.28}
\lambda_1^2= \frac{A+\sqrt{A^2-4a^2B}}{2a^2},&& \lambda_2^2 = \frac{A-\sqrt{A^2-4a^2B}}{2a^2}.
\end{align}
From the expression for $p(x)$ and \eqref{eq.4.27} we have
\begin{align*}
A^2-4a^2B & = x_1^2x_2^2\\
2A& = x_1^2 + x_2^2.
\end{align*}
Introducing these relations in \eqref{eq.4.28} gives
\begin{align}
\label{eq.4.29}
\lambda_1^2 = \left( \frac{x_2-x_1}{2a} \right)^2, &&\lambda_2^2 = \left( \frac{x_1+x_2}{2a} \right)^2
\end{align}
which leads to 
\begin{align}
\label{eq.4.30}
\lambda_1 =   \frac{x_2- x_1}{2a}, &&  \lambda_2 =  \frac{x_1+ x_2}{2a} 
\end{align}
where the sign in the square root is determined by a direct check on \eqref{eq.4.28}. From \eqref{eq.4.30}
we now get the relation between the moduli of the two surfaces 
\begin{equation}
\label{eq.4.31}
k_1 = \frac{\lambda_1}{\lambda_2} = \frac{x_2-x_1}{x_2+x_1} = \frac{1-k}{1+k}
\end{equation}
where $k$ and $k_1$ denote the \emph{elliptic moduli} of $\varSigma$ and $\varSigma_1$, respectively (see \cite{A90}). This
shows that the surfaces are closely related as claimed at the end of Section~\ref{subsect.4.3}.

Having obtained equality \eqref{eq.4.31} we are now in a position to state the following proposition relating
$\varSigma$ and $\varSigma_1$.

\begin{proposition}
\label{prop.4.2}
The following statements express the relation between the Riemann surfaces $\varSigma$ and $\varSigma_1$:
\begin{enumerate}[(i)]
\item For the  elliptic moduli of $\varSigma$ and $\varSigma_1$, respectively $k$, $k_1$, we have  
\[
k_1 = \frac{1-k}{1+k}
\]
\item 
There is a holomorphic map $\varphi\colon\varSigma\to\varSigma_1$ given by
\[
\varSigma \ni (x, w) \mapsto (\lambda, \mu) := \left(\mbi(1+k)\frac{x}{w}, \frac{k^2x^4-1}{w^2}    \right) \in \varSigma_1.
\]
\item Under the map $\varphi$ of {\it (ii)} the points at infinity of $\varSigma$ are mapped to  
$\mbo_1:=(0,1)$ and  $(0,\pm 1)$, 
$(\pm 1, 0)$, $(\pm 1/k, 0)$ are mapped to $\mbo_2:=(0,-1)$, $\infty_1$ and $\infty_2$, respectively.
\item The relation between the holomorphic forms of both surfaces is expressed by
\[
\varphi^*\left(\frac{d\lambda}{\mu} \right) = -\mbi (1+k) \frac{dx}{w}.
\]
\end{enumerate}
\end{proposition}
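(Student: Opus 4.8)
The plan is to establish the four statements in order. Statement (i) requires nothing new: it is precisely the identity \eqref{eq.4.31}, obtained above from \eqref{eq.4.30}, and I would simply recall it.

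For (ii) the first step is to verify that the stated formula really maps $\varSigma$ into $\varSigma_1$. With $(x,w)\in\varSigma$ and $\lambda=\mbi(1+k)x/w$, $\mu=(k^2x^4-1)/w^2$, one has
\[
1-\lambda^2=\frac{w^2+(1+k)^2x^2}{w^2},\qquad 1-k_1^2\lambda^2=\frac{w^2+(1-k)^2x^2}{w^2},
\]
using $k_1=(1-k)/(1+k)$ from (i); hence the required relation $\mu^2=(1-\lambda^2)(1-k_1^2\lambda^2)$ is equivalent to the polynomial identity
\[
(k^2x^4-1)^2=\bigl(w^2+(1+k)^2x^2\bigr)\bigl(w^2+(1-k)^2x^2\bigr),
\]
which I would check by substituting $w^2=1-(1+k^2)x^2+k^2x^4$ and expanding (the coefficients of $x^2$ and $x^6$ cancel and that of $x^4$ reduces to $-2k^2$). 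Holomorphy of $\varphi$ is then automatic: the formulas define a rational map of the smooth projective curve $\varSigma$ to $\varSigma_1$, and a rational map from a smooth curve is a morphism; equivalently, $\varphi$ is holomorphic off the finite set $\{w=0\}\cup\{x=\infty\}$ and, $\varSigma_1$ being compact, extends holomorphically across those points, whose images are determined in (iii).

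For (iii) I would compute the image of each distinguished point by a local expansion. Over $x=\infty$ one has $w\sim\pm kx^2$, so $\lambda\to0$ and $\mu\to1$: both points of $\varSigma$ at infinity map to $\mbo_1=(0,1)$. At $(0,\pm1)$ one reads off $\lambda=0$, $\mu=-1$, \emph{i.e.}, the image is $\mbo_2=(0,-1)$. At a branch point $(\pm1,0)$, writing $x=\pm1+\varepsilon$ gives $w^2\sim-2(1-k^2)\varepsilon$, hence $\lambda,\mu\to\infty$ with $\mu/\lambda^2\to(1-k)/(1+k)=k_1$; since $\infty_1$ is, by the normalization convention, the point near which $\mu\sim k_1\lambda^2$, the image is $\infty_1$. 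The same computation at $(\pm1/k,0)$, with $x=\pm1/k+\varepsilon$, gives $\mu/\lambda^2\to-k_1$, so the image is $\infty_2$. The only point in the whole argument demanding real care is this identification: one must track the \emph{sign} of $\lim\mu/\lambda^2$ to tell $\infty_1$ from $\infty_2$, and one must confirm that $\varphi$ extends \emph{holomorphically}, and not merely rationally, across $w=0$, which forces one to pass to the compactifying charts of $\varSigma_1$ in which the indeterminate expression $(x,w)\mapsto(\lambda,\mu)$ becomes a genuine holomorphic germ.

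For (iv) I would differentiate $\lambda=\mbi(1+k)x/w$ along $\varSigma$. Differentiating $w^2=(1-x^2)(1-k^2x^2)$ gives $w\,dw=-x(1+k^2-2k^2x^2)\,dx$, whence $w\,dx-x\,dw=\frac{1-k^2x^4}{w}\,dx$ and therefore $d\lambda=\mbi(1+k)\frac{1-k^2x^4}{w^3}\,dx$. Dividing by $\mu=-\frac{1-k^2x^4}{w^2}$ yields $\varphi^*\!\left(\frac{d\lambda}{\mu}\right)=-\mbi(1+k)\frac{dx}{w}$, as asserted.
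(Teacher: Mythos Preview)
Your proof is correct and follows essentially the same approach as the paper: (i) is \eqref{eq.4.31}, (ii) is the direct algebraic check that $(\lambda,\mu)$ satisfies the defining equation of $\varSigma_1$, (iii) is by direct substitution/local expansion, and (iv) is by differentiating along $\varSigma$. The only notable difference is that the paper explains the origin of the formula in (ii) as a composition of two Gauss transformations (citing Akhiezer, \S 39) before checking it, whereas you simply verify the given formula; in compensation your verifications---particularly the factorizations $w^2+(1\pm k)^2x^2=(1\pm kx^2)^2$ and the $\mu/\lambda^2\to\pm k_1$ limits distinguishing $\infty_1$ from $\infty_2$---are carried out explicitly where the paper merely asserts them.
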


\begin{proof}
{\it (i)}  was proven in \eqref{eq.4.31}. 

The formula in {\it (ii)} is obtained  by composing the  two Gauss transformations corresponding 
in terms of elliptic moduli to $k\mapsto k'_1$ and $k'\mapsto k_1$ (see \cite[\S 39]{A90}). That it
defines a map  $\varSigma\to\varSigma_1$ can be directly checked by a substitution of \eqref{eq.4.31}  in  
$\mu^2 = (1-\lambda^2)(1-k_1^2\lambda^2)$. We note that this map is not injective; in fact 
it is $2$ to $1$.  {\it (iii)}  is easily obtained by direct substitution in formula~{\it (ii)}.

The expression {\it (iv)} follows directly from   {\it (ii)} by differentiation.
\end{proof}

Before we attempt to formulate expression \eqref{eq.4.11} in $\varSigma_1$ we are going to write $u_1+u_2$ of \eqref{eq.4.26} as
a single integral as in \eqref{eq.4.11} in order to make it possible to compare the two results. From \eqref{eq.4.26}
\[
u_1 + u_2 = \int_0^{\hat{\lambda}_1/\lambda_1} \frac{d\lambda}{\mu} + \int_0^{\hat{\lambda}_2/\lambda_1} \frac{d\lambda}{\mu}
\]
which we seek to write in the form
\begin{equation}
\label{eq.4.32}
u_1 + u_2 = \int_0^{\xi_0} \frac{d\lambda}{\mu}.
\end{equation}
To obtain $\xi_0$ we make use of the formula for the sum of arguments of the elliptic function $\sn$ (see \cite{A90}),
\begin{equation}
\label{eq.4.33}
\sn (u_1+u_2) = \frac{\sn u_1 \cn u_2 \dn u_2 + \sn u_2 \cn u_1 \dn u_1}{1-k_1^2 \sn^2 u_1 \sn^2 u_2}.
\end{equation}
We have:
\begin{align*}
\sn u_1 & = \frac{\hat{\lambda}_1}{\lambda_1},\qquad \sn u_2  = \frac{\hat{\lambda}_2}{\lambda_1},\\
\cn u_2 \dn u_2 &= \left[ 1-\left(\frac{\hat{\lambda}_2}{\lambda_1}\right)^2  \right]^{1/2}\cdot\left[1-k_1^2\left(\frac{\hat{\lambda}_2}{\lambda_1}\right)^2  \right]^{1/2}
= \mu \left(\frac{\hat{\lambda}_2}{\lambda_1} \right),\\
\cn u_1 \dn u_1 &= \mu\left(\frac{\hat{\lambda}_1}{\lambda_1} \right).
\end{align*}
Since $q_1(\lambda_1\lambda)q_2(\lambda_1\lambda) = (a\lambda_1\lambda_2\mu)^2-z_0^2$ (see \eqref{eq.def.q1-q2}) and $\hat{\lambda}_1$,
$\hat{\lambda}_2$ are the zeros of $q_1$ that are not  zeros of $a\lambda_1\lambda_2 + z_0$, we have
\begin{equation}
\label{eq.4.34}
\mu\left(\frac{\hat{\lambda}_1}{\lambda_1}\right) = \mu\left(\frac{\hat{\lambda}_2}{\lambda_1}\right)
 = \frac{z_0}{a\lambda_1\lambda_2}
\end{equation}
where the factor $1/(a\lambda_1\lambda_2)$ comes from the normalization of $\mu$.

Using the above results in \eqref{eq.4.33}, taking into account, \eqref{eq.4.32} gives 
\begin{equation}
\label{eq.4.35}
\xi_0 = \frac{1}{\lambda_1}\,\frac{\hat{\lambda}_1+\hat{\lambda}_2}{1-k_1^2\hat{\lambda}_1^2\hat{\lambda}_2^2/\lambda_1^4}\,\frac{z_0}{a\lambda_1\lambda_2}
\end{equation}
The denominator of the above formula can be simplified as follows
\[
1-k_1^2\,\frac{\hat{\lambda}_1^2\hat{\lambda}_2^2}{\lambda_1^4} = 1 - \frac{\hat{\lambda}_1^2\hat{\lambda}_2^2}{\lambda_1^2\lambda_2^2}
\]
since $k_1^2 = \lambda_1^2/\lambda_2^2$. Using \eqref{eq.4.34} we obtain
\[
1 - \frac{\hat{\lambda}_1^2\hat{\lambda}_2^2}{\lambda_1^2\lambda_2^2} = 1 -\frac{y_0^2}{a^2\lambda_1^2\lambda_2^2} = \frac{z_0^2}{B}
\]
as $\hat{\lambda}_1^2\hat{\lambda}_2^2=y_0^2/a^2$, $\lambda_1^2\lambda_2^2=B/a^2$ and $B=y_0^2+z_0^2$.

Finally,
\begin{equation}
\label{eq.4.36}
\xi_0 = \frac{x_0}{\lambda_1a}\, \frac{z_0}{a\lambda_1\lambda_2}\, \frac{B}{z_0^2} = \frac{x_0}{z_0} \lambda_2
\end{equation}
where we have used the result $\hat{\lambda}_1 + \hat{\lambda}_2 = x_0/a$.

We shall now transform the terms on the right-hand side of \eqref{eq.4.11} into the surface $\varSigma_1$. We first take the
expression for $u_0$  given in  \eqref{eq.*}
\begin{equation}
\label{eq.4.37}
u_0 = \int_0^{x_0/x_1} \frac{dx}{w(x)} = \frac{\mbi}{1+k} \int_0^{\lambda_0} \frac{d\lambda}{\mu(\lambda)},
\end{equation}
where
\begin{equation}
\label{eq.4.38}
\lambda_0 = \mbi (1+k)\, \frac{x_0/x_1}{w(x_0/x_1)}.
\end{equation}
For the sake of simplicity in the calculations instead of trying to transform $\lambda_0$ into $\xi_0$ we prefer to take $\xi_0$ in \eqref{eq.4.36}
and transform it as follows:
\begin{align} 
\label{eq.4.39}
\xi_0 & = \frac{x_0}{z_0}\, \lambda_2 = \frac{x_0}{w(x_0/x_1)}\, \frac{2\mbi a}{x_1x_2}\,\lambda_2\\
          & =   \frac{x_0}{w(x_0/x_1)}\, \frac{2\mbi a}{x_1x_2}\, \frac{x_1+x_2}{2a}   \notag \\
          &= \mbi\, \frac{x_0}{x_1}\, \frac{1}{w(x_0/x_1)}\, (1+k) = \lambda_0. \notag
\end{align}
where we have used  formula~\eqref{eq.4.40} and the relation $w(x_0/x_1) = $ $2\mbi az_0/(x_1x_2)$, which is a consequence
of \eqref{eq.4.4} and \eqref{eq.4.6} for $t=0$. 
\eqref{eq.4.30}. To end the calculation for the comparison of formulas \eqref{eq.4.11}
and \eqref{eq.4.26} we need to derive relations between the complete elliptic integrals on  $\varSigma$ and $\varSigma_1$. Using 
{\it (ii)} and {\it (iv)} of Proposition~\ref{prop.4.2} we have
\begin{align}
\label{eq.4.40}
\mbK &= \int_0^1 \frac{dx}{w(x)} = \frac{1}{\mbi(1+k)} \int_0^\infty \frac{d\lambda}{\mu(\lambda)} = \frac{\mbK_1'}{\mbi(1+k)}\\
\label{eq.4.41}
\mbK' &= \int_0^\infty \frac{dx}{w(x)} = \frac{1}{\mbi(1+k)} \int_{\mbo_2}^{\mbo_1} \frac{d\lambda}{\mu(\lambda)} = \frac{2\mbK_1}{\mbi(1+k)}. 
\end{align}

Substitution of \eqref{eq.4.39}, \eqref{eq.4.40} and \eqref{eq.4.41} in \eqref{eq.4.11} now gives
\begin{equation}
\label{eq.4.43}
-(1+k) x_2 t = -i(1+k)u_0 + 2\mbK_1 + 4n\mbK_1 + 2\mbi m K_1'.
\end{equation}
Using \eqref{eq.4.30} in \eqref{eq.4.26} and  the equalities $k=x_1/x_2$, $\lambda_2=(x_1+x_2)/2a$,
we see that formulas \eqref{eq.4.43} and \eqref{eq.4.26} coincide.

\bibliography{Singularities}
\bibliographystyle{amsplain}
\end{document}